\newtheorem{theorem}{Theorem}
\newtheorem{lemma}[theorem]{Lemma}
\newtheorem{proposition}[theorem]{Proposition}
\newtheorem{corollary}[theorem]{Corollary}
\theoremstyle{definition}
\theoremstyle{remark}
\newtheorem{remark}{Remark}
\DeclareMathOperator{\sech}{sech}
\newcommand{\R}{\mathbb{R}}
\newcommand{\N}{\mathbb{N}}
\newcommand{\Z}{\mathbb{Z}}
\newcommand{\la}{\langle}
\newcommand{\ra}{\rangle}
\newcommand{\pd}{\partial}
\begin{document}
\title{Asymptotic stability of lattice solitons in the energy space}
\author{Tetsu Mizumachi}
\address{Faculty of Mathematics, Kyushu University,
Hakozaki 6-10-1, 812-8581 Japan}
\email{mizumati@math.kyushu-u.ac.jp}
\begin{abstract}
Orbital and asymptotic stability for 1-soliton solutions to the Toda lattice
equations as well as small solitary waves to the FPU lattice equations are
established in the energy space.
Unlike analogous Hamiltonian PDEs, the lattice equations do not
conserve momentum.
Furthermore, the Toda lattice equation is a bidirectional model that does not
fit in with existing theory for Hamiltonian system by Grillakis, Shatah and
Strauss. 

To prove stability of 1-soliton solutions, we split a  solution around
a 1-soliton into a small solution that moves more slowly than the main
solitary wave, and an exponentially localized part. We apply a decay estimate
for solutions to a linearized Toda equation which has been recently proved by
Mizumachi and Pego to estimate the localized part.
We improve the asymptotic stability results for FPU lattices in a weighted
space obtained by Friesecke and Pego.
\end{abstract}
\keywords{Toda lattice, FPU lattice, solitary wave, asymptotic stability}
\subjclass[2000]{37K60, 35B35, 35Q51, 37K40, 37K45}
\maketitle
\section{Introduction}
\label{sec:intro}
In this paper, we study asymptotic stability of solitary waves
to a class of Hamiltonian systems of particles connected by nonlinear springs. A typical model of these lattice
is Toda lattice
\begin{equation}
\label{eq:Toda}
\ddot{q}(t,n)=e^{-(q(t,n)-q(t,n-1))}-e^{-(q(t,n+1)-q(t,n))}
\quad\text{for $t\in\R$ and $n\in\Z$},
\end{equation}
where $q(t,n)$ denotes the displacement of the $n$-th particle at time $t$
and $\;\dot{}\;$ denotes differentiation with respect to $t$.
Let $p(t,n)=\dot{q}(t,n)$,  $r(t,n)=q(t,n+1)-q(t,n)$,
$u(t,n)={}^t\!(r(t,n),p(t,n))$ and $V(r)=e^{-r}-1+r$. 
Toda lattice \eqref{eq:Toda} is an integrable system with the Hamiltonian
$$H(u(t))=\sum_{n\in\Z}\left(\frac12p(t,n)^2+V(r(t,n))\right),$$
(see \cite{Fl1}) and it can be rewritten as
\begin{equation}
  \label{eq:Toda2}
  \frac{du}{dt}=JH'(u),
\end{equation}
where $$J=\begin{pmatrix} 0 & e^\partial-1 \\ 1-e^{-\partial} & 0
\end{pmatrix},$$
and $e^{\pm\pd}=e^{\pm\frac{\pd}{\pd n}}$ are the shift operator defined
by $(e^{\pm\pd})f(n)=f(n\pm1)$ for every sequence $\{f(n)\}_{n\in\Z}$
and $H'$ is the Fr\'{e}chet derivative of $H$ in $l^2\times l^2$.
\par

Toda lattice \eqref{eq:Toda2} has a two-parameter family of solitary waves
$$\mathcal{M}=\left\{u_{c}(t+\delta) \,\bigm|\, c>1,\;\delta\in\R\right\},$$  
where $u_c(t,n)=\tilde{u}_c(n-ct)$, $\tilde{u}_c(x)=
(\tilde{r}_c(x),\tilde{p}_c(x))$ and
\begin{align}
\label{eq:q}
& \tilde{q}_c(x)=\log\frac{\cosh\{\kappa(x-1)\}}{\cosh\kappa x},\\
\label{eq:u}
& \tilde{p}_c(x)=-c\pd_x\tilde{q}_c(x),\quad 
\tilde{r}_c(x)=\tilde{q}_c(x+1)-\tilde{q}_c(x),
\end{align}
and $\kappa=\kappa(c)$ is a unique positive solution of $c=\sinh\kappa/\kappa$.
\par

Friesecke and Pego \cite{FP2,FP3} have proved asymptotic stability of solitary
waves to FPU lattice in a weighted space assuming an exponential linear
stability property (H1) below.

To state the assumption explicitly, we introduce several notations.
Let $l^2_a$ be a Hilbert space of $\R^2$-sequences equipped with the norm
$$\|u\|_{l^2_a}=\left(\sum_{n\in\N}e^{2an}|u(n)|^2\right)^{1/2}.$$ 
Let $\la u,v\ra:=\sum_{n\in\Z}(u_1(n)u_2(n)+v_1(n)v_2(n))$
for $\R^2$-sequences $u=(u_1,u_2)$ and $v=(v_1,v_2)$ and
$\|u\|_{l^2}=(\la u,u\ra)^{1/2}$.

\begin{itemize}
\item [(H1)]
Let $a>0$ be a small number. There exist positive numbers $K$ and $\beta$
such that if
\begin{equation}
  \label{eq:secular}
\la v(s),J^{-1}\dot{u}_c(s)\ra
=\la v(s),J^{-1}\pd_cu_c(s)\ra=0,  
\end{equation}
then a solution to 
\begin{align}
\label{eq:lToda}
& \frac{dv}{dt}=JH''(u_c(t))v
\end{align}
satisfies 
\begin{equation} \label{eq:decay}
\| e^{a(\cdot-ct)}u(t,\cdot)\|_{l^2} \le K e^{-\beta(t-s)}
\| e^{a(\cdot-cs)}u(s)\|_{l^2}
\quad\text{for every $t\ge s$.}
\end{equation}
\end{itemize}
\begin{remark}
Solutions $\dot{u}_c(t)$ and $\pd_cu_c(t)$ to \eqref{eq:lToda} correspond to
infinitesimal changes on $t$ and $c$ and they do not decay as $t\to\infty$.
Since $J^{-1}\dot{u}_c(t)$ and $J^{-1}\pd_cu_c(t)$  are the corresponding
neutral modes to the adjoint equation
$$\frac{dw}{dt}=H''(u_c(t))Jw,$$
the condition (H1) says that a solution to
\eqref{eq:lToda} decays exponentially as $t\to\infty$
if it does not include neutral modes $\dot{u}_c(t)$ and $\pd_cu_c(t)$.
\end{remark}
\begin{remark}
In \eqref{eq:secular}, we set
  \begin{equation*}
J^{-1}= \begin{pmatrix}
  0 & \sum_{k=-\infty}^0 e^{k\pd} \\ \sum_{k=-\infty}^{-1}e^{k\pd} & 0
\end{pmatrix}    
  \end{equation*}
so that $J^{-1}$ is a bounded operator in $l^2_{-a}$.
(Note that $u$ decays exponentially as $n\to\pm\infty$ if $u\in l^2_{\pm a}$
and $a>0$ and that $\|e^{-\pd}u\|_{l^2_{-a}}=e^{-a}\|u\|_{l^2_{-a}}$.)
Since $\dot{u}_c$ and $\pd_cu_c$ decay like $e^{-2\kappa|n-ct|}$ as
$n\to\pm\infty$, we have $J^{-1}\dot{u}_c$, $J^{-1}\pd_cu_c\in l^2_{-a}$ for
every $a\in (0,2\kappa(c))$.
\end{remark}
Friesecke and Pego prove in \cite{FP2} that solitary waves to FPU lattice
are asymptotically stable in $l^2_a$ if (H1) holds.
They have also proved in \cite{FP3,FP4} that small solitary waves of
FPU lattice can be approximated by KdV solitons and
that they satisfy (H1).
In \cite{MP}, we use the linearized B\"{a}cklund transformation to show that
every 1-soliton of Toda lattice satisfies (H1) and prove that it is
asymptotically stable in $l^2_a$ without assuming smallness of solitons.

Our goal in the present paper is to prove asymptotic stability of 1-solitons
in $l^2$.
\begin{theorem}
  \label{thm:1}
Let $c_0>1$, $\tau_0\in\R$ and let $u(t)$ be a solution to \eqref{eq:Toda2}
with $u(0)=u_{c_0}(\tau_0)+v_0$.
For every $\varepsilon>0$, there exists a positive number $\delta>0$ satisfying
the following: If $\|v_0\|_{l^2}<\delta$, there exist constants $c_+>1$ and
$\sigma\in(1,c_+)$ and a $C^1$-function $x(t)$ such that
\begin{align*}
& \|u(t)-\tilde{u}_{c_0}(\cdot-x(t))\|_{l^2}< \varepsilon,\\
& 
\lim_{t\to\infty}\left\|
u(t)-\tilde{u}_{c_+}(\cdot-x(t))\right\|_{l^2(n\ge \sigma t)}=0,
\\ 
& \sup_{t\in\R}\left(|c(t)-c_0|+|\dot{x}(t)-c_0|\right)=O(\|v_0\|_{l^2}),
\\ 
& \lim_{t\to\infty}c(t)=c_+,\quad \lim_{t\to\infty}\dot{x}(t)=c_+.
\end{align*}
\end{theorem}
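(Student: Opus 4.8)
The plan is to combine a modulational decomposition with the weighted linear decay estimate (H1) and a monotonicity argument in a frame moving slightly slower than the soliton.

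\emph{Decomposition and modulation equations.} First I would write the solution as $u(t)=R+v(t)$ with $R=R(t)=\tilde u_{c(t)}(\cdot-x(t))$, and fix the two modulation parameters $c(t),x(t)$ by imposing the symplectic orthogonality conditions $\la v(t),J^{-1}\pd_x\tilde u_{c(t)}(\cdot-x(t))\ra=\la v(t),J^{-1}\pd_c\tilde u_{c(t)}(\cdot-x(t))\ra=0$ modeled on \eqref{eq:secular}. Since $\dot u_c=-c\,\pd_x\tilde u_c$, these are exactly the conditions under which (H1) applies to the linearized flow around the modulated profile. For $\|v_0\|_{l^2}$ small the implicit function theorem produces $C^1$ functions $c(t),x(t)$ with $|c(0)-c_0|+|x(0)-\tau_0|=O(\|v_0\|_{l^2})$, as long as $v$ stays small. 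Differentiating the two constraints and using that the neutral modes $\pd_x\tilde u_c,\pd_c\tilde u_c$ decay like $e^{-2\kappa|n-x|}$, I would derive a linear system for $(\dot c,\dot x-c)$ whose Gram matrix is invertible, so that $|\dot c(t)|+|\dot x(t)-c(t)|$ is bounded by the square of an exponentially localized norm of $v$; the leading linear contribution is annihilated by the orthogonality.

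\emph{Splitting into a slow part and a localized part.} The heart of the argument, as announced in the abstract, is to write $v=v_1+v_2$, where $v_2$ is exponentially localized around $x(t)$ and $v_1$ is a small remainder that travels strictly more slowly than the soliton. The mechanism is that the soliton speed satisfies $c>1$ while every group velocity of the linearized lattice is bounded by the sound speed $1$; hence no radiation can overtake the soliton, and ahead of any point moving at an intermediate speed $\sigma\in(1,c)$ the perturbation is exponentially small. I would measure the localized part by the traveling weighted norm $\|e^{a(\cdot-x(t))}v(t)\|_{l^2}$ (finite precisely because $v$ decays to the right of the soliton) and the slow part by $\sup_t\|v_1(t)\|_{l^2}$.

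\emph{Weighted decay via (H1) and the bootstrap.} The equation for $v$ has the form $\dot v=JH''(R)v+N(v)-\dot c\,\pd_cR-(\dot x-c)\pd_xR$ with $N(v)$ quadratic in $v$. Feeding the orthogonality conditions into the Duhamel formula and invoking (H1), the linear propagator contracts the weighted norm at rate $e^{-\beta t}$ on the relevant orthogonal complement. The quadratic term closes in the weighted norm because $\|e^{a(\cdot-x)}N(v)\|_{l^2}\lesssim\|e^{a(\cdot-x)}v\|_{l^2}\|v\|_{l^\infty}$, which is one order higher; the modulation terms are likewise higher order by the previous paragraph. I would therefore run a continuity/bootstrap argument on the three quantities $\sup_t\|v(t)\|_{l^2}$ (orbital stability), $\|e^{a(\cdot-x(t))}v(t)\|_{l^2}$ (exponential decay of the localized part), and $\sup_t\|v_1(t)\|_{l^2}$, obtaining that the first two stay $O(\|v_0\|_{l^2})$ while the weighted norm in fact decays like $e^{-\beta t/2}$.

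\emph{Monotonicity and convergence of the parameters.} Finally, to upgrade local smallness to the asymptotic statements, I would introduce a local functional $I(t)=\sum_{n}\psi(n-\sigma t)\,(\text{quadratic density of }u)$, with $\psi$ a smooth increasing cut-off and $\sigma\in(1,c_+)$ chosen above the sound speed. Because nothing propagates into the region $n\ge\sigma t$ except the soliton, $\dot I(t)$ has a definite sign up to exponentially localized errors controlled by the decaying weighted norm; this forces $c(t)$ to converge to a limit $c_+>1$, and the integrated flux identity yields $\|u(t)-\tilde u_{c_+}(\cdot-x(t))\|_{l^2(n\ge\sigma t)}\to0$ together with $\dot x(t)\to c_+$. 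The main obstacle is closing the uniform $l^2$ estimate in the \emph{absence} of a coercive Lyapunov functional: since the Toda lattice is bidirectional and conserves no momentum, the Grillakis--Shatah--Strauss framework is unavailable, so orbital stability cannot be read off from a single conserved quantity and must instead be extracted from the interplay of the weighted decay (H1), the separation of the slow radiation, and the moving-frame monotonicity.
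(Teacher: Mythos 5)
Your proposal reproduces the overall architecture of the paper (modulation, a slow/localized splitting, the weighted decay from (H1), and a Martel--Merle monotonicity in a frame of speed $\sigma\in(1,c_+)$), but it stumbles exactly on the two points that make the energy-space case nontrivial. First, you impose \emph{both} secular conditions of \eqref{eq:secular} on the full perturbation $v\in l^2$. This is ill-defined: $J^{-1}\pd_c u_c$ tends to a nonzero constant as $n\to\infty$, so $\la v,J^{-1}\pd_c u_c\ra$ makes no sense for generic $v\in l^2$ --- this is precisely the obstruction the paper names in the introduction. The paper's resolution is to perform the splitting \emph{first}: $v_1$ is defined as the solution of the full nonlinear Toda flow \eqref{eq:v1} with data $v_0$ (so it never needs a weight), $v_2=v-v_1$ starts as a difference of two soliton profiles and stays in $l^2_a$ (Proposition \ref{prop:1}), and the constraints are $\la v,J^{-1}\dot u_c(\gamma)\ra=0$ on all of $v$ but $\la v_2,J^{-1}\pd_c u_c(\gamma)\ra=0$ only on the localized piece (Lemma \ref{lem:decomp}). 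Second, your claim that $\|e^{a(\cdot-x(t))}v(t)\|_{l^2}$ is finite ``because $v$ decays to the right of the soliton'' is false for $l^2$ data: $v_0$ may carry arbitrary mass ahead of the main wave --- e.g.\ small solitary waves with $\kappa(c)\downarrow 1$, which lie in no fixed $l^2_a$ --- and ``no radiation overtakes the soliton'' says nothing about data already in front at $t=0$. Consequently your bootstrap conclusion that the weighted norm decays like $e^{-\beta t/2}$ cannot hold; that is the weighted-space theorem of \cite{FP2,MP}, the very hypothesis on $v_0$ the theorem removes. In the paper, $\|v_2(t)\|_{X(t)}$ is driven through Duhamel by $\|v_1(s)\|_{W(s)}$, which is controlled only in $L^2_t$ by the virial lemma (Lemma \ref{lem:monotonicity}), so one obtains the time-integrated bound \eqref{eq:v-w2} and decay to zero, not exponential decay.

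There is also a missing closure of the uniform $l^2$ estimate. You correctly note that Grillakis--Shatah--Strauss is unavailable, but then say orbital stability is ``extracted from the interplay'' of the other estimates without giving a mechanism; the weighted norm controls nothing behind the soliton, so your continuity argument does not close. The paper's mechanism is Lemma \ref{lem:speed-Hamiltonian}: the Toda potential $V(r)=e^{-r}-1+r$ is strictly convex, so conservation of $H$ together with the single well-defined orthogonality condition \eqref{eq:orth1} yields $\|v(t)\|_{l^2}^2\le C(|c(t)-c_0|+\|v_0\|_{l^2})$, while $|c(t)-c_0|\lesssim\|v_0\|_{l^2}$ follows by integrating $\dot c=O(\|v_1\|_{W(t)}^2+\|v_2\|_{X(t)}^2)$ using the virial estimate and \eqref{eq:v-w2}. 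With that lemma supplied, and with your bootstrap weakened from pointwise exponential decay to the $L^2$-in-time bounds, your outline would align with the paper's actual proof (Proposition \ref{prop:2} plus the final monotonicity argument, which you do have essentially right).
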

\begin{remark}
By a simple computation, we see $dH(u_c)/dc>0$ and $\lim_{c\to1}H(u_c)=0$
(see e.g. \cite{To}).
So we have arbitrary small 1-solitons in $l^2$. However, small solitary waves
do not belong to an exponentially weighted space if $c$ is close to
$1$ because $u_c(t)$ decays like $e^{-2\kappa(c)|n-x(t)|}$ as $n\to\infty$
and $\lim_{c\downarrow1}\kappa(c)=0$.
Thus from Friesecke and Pego \cite{FP1,FP2,FP3,FP4} and Mizumachi and Pego
\cite{MP}, we cannot see whether a solitary wave can be stable under
perturbations which include small solitary waves. Theorem \ref{thm:1} and 
Theorem \ref{thm:2} below insist that a solitary wave does not collapse by
small perturbations including other solitary waves.
\end{remark}
Since Benjamin \cite{Be} and Bona \cite{Bo} studied stability of KdV
$1$-solitons, a lot of results have been obtained on stability of solitary
waves to infinite dimensional Hamiltonian systems (see \cite{Cazenave} and
references therein). In those results, they utilized the fact that the
Hamiltonian systems have another conservation law (like momentum for KdV and
charge for NLS) and a solitary wave solution is a local minimizer of the
Hamiltonian among solutions whose momentum or charge is the same as the
solitary wave solution.
\par
However, Toda and FPU lattices are bidirectional models like Boussinesq
equations (see \cite{BoCS1,BoCS2,PSW}) such that a solitary wave solution is
a saddle point of the sum of Hamiltonian and the momentum multiplied by
the speed of the
solitary wave whose second variation has infinite dimensional indefiniteness.
Furthermore, a solution to Toda lattice does not conserve momentum in general
because Noether's theorem is not applicable to spatial variable $n\in\Z$.
Hence stability of solitary waves does not follow from the theory of
Hamiltonian system by Grillakis, Shatah and Strauss \cite{GSS1,GSS2} and
Shatah and Strauss \cite{ShSt}.
For the same reason, it is not possible to use a Liouville theorem
like \cite{MM1} to prove asymptotic stability of solitary waves.
\par

Luckily, solitary waves for a class of lattice equations including the Toda
lattice equation separate from each other as $t\to\infty$.
As can be seen from \eqref{eq:q} and \eqref{eq:u}, 
speed of solitary waves which move to the right is larger than $1$ and
the larger a solitary wave is the faster it moves, whereas the absolute value of
group velocities are less than $1$. So a solution to \eqref{eq:Toda2} is
decoupled into a train of solitary waves and a remainder term as $t\to\infty$.
\par
Friesecke and Pego \cite{FP1,FP2,FP3,FP4} utilized this fact and prove
asymptotic stability of solitary waves to FPU lattice in an exponentially
weighted space. 
They decompose a solitary wave as
\begin{equation}
  \label{eq:decomp}
  \begin{split}
& u(t)=u_{c(t)}(\gamma(t))+v(t) =\tilde{u}_{c(t)}(\cdot-x(t))+v(t),\\
&  x(t)=c(t)\gamma(t),
  \end{split}
\end{equation}
where $u_{c(t)}(\gamma(t))$ denotes a main solitary wave, and
$c(t)$ and $x(t)$ are modulation parameters of the speed
and the phase shift of the main wave, respectively. They prove that a solution
which lies in a neighborhood of $\mathcal{M}$ is absorbed into $\mathcal{M}$ 
exponentially in $l^2_a$-norm as $t\to\infty$. Their proof basically follows
the idea of Pego and Weinstein \cite{PW} and impose the symplectical
orthogonality condition \eqref{eq:secular} on $v$. 
One of the difficulty to use their method in the energy space is that
$J^{-1}\pd_cu_c$ tends to a nonzero constant as $n\to\infty$ and
\eqref{eq:secular} is not well defined for $v\in l^2$. 
\par
Our strategy is to decompose $v(t)$ into the sum of a small solution $v_1(t)$
of \eqref{eq:Toda2} and $v_2(t)$ that is driven by an interaction of $u_c$ and
dispersive part of the solution.
Since $v_2(t)$ is exponentially localized in front, we can estimate $v_2(t)$ by
using exponential linear stability \eqref{eq:decay}. Since $v_1(t)$ moves more
slowly than the main solitary waves, it locally tends to $0$ around
the solitary wave. To fix the decomposition, we impose the constraint
$$\la v, J^{-1}\dot{u}_{c}(\gamma)\ra=
\la v_2,J^{-1}\pd_cu_c(\gamma)\ra=0$$ instead of \eqref{eq:secular}.
\par
Recently, Martel and Merle \cite{MM2} give a direct proof of the asymptotic
stability results in $H^1(\R)$ for generalized KdV solitons based on a
virial identity (which first appeared in Kato \cite{K}).
Because the Toda lattice and KdV equations have a similarity that
the dominant solitary wave outruns and is separated from other part of
solutions as $t\to\infty$, their idea seems promising.
We prove a \textit{virial lemma} [Lemma \ref{lem:monotonicity} in
Section \ref{sec:3}] for $v_1(t)$ and  apply local energy decay estimates
for other part of the solution instead of proving a virial lemma around
solitary waves.This enables us to prove our results without numerics whereas
\cite{MM1,MM2} need some numerical computation to prove positivity of a
quadratic form. We expect our proof is applicable also for Hamiltonian PDEs
like KdV equation by using the renormalization method by Ei \cite{Ei} and
Promislow \cite{Prom} (see \cite{Mi} for an application to the generalized KdV
equation in a weighted space).
\par
Now, let us consider asymptotic stability of solitary waves to FPU lattice
equations. It is interesting to see whether solitary waves to non-integrable
lattices are robust to perturbations in the energy class.
Let $u(t,n)={}^t\!(r(t,n),p(t,n))$ be a solution to 
\begin{equation}
  \label{eq:FPU}
\frac{du}{dt}=JH_F'(u)\quad\text{for $t\in\R$,}
\end{equation}
where
$$H_F(u(t))=\sum_{n\in\Z}\left(\frac12p(t,n)^2+V_F(r(t,n))\right),$$
and $V_F$ is a potential satisfying
\begin{equation}
  \label{eq:H2}
V_F\in C^4(\R;\R),\quad V_F(0)=V_F'(0)=0, \quad V_F''(0)>0, \quad
V_F'''(0)\ne0. \tag{H2}
\end{equation}
If $c>c_s:=\sqrt{V_F''(0)}$ and $c$ is sufficiently close to $c_s$,
Friesecke and Pego \cite{FP1} show that there exists a unique solution
$\tilde{u}_c(x)$ 
\begin{equation}
  \label{eq:solitary}
-c\pd_x\tilde{u}_c(x)=JH_F'(\tilde{u}_c(x))
\quad\text{for $x\in\R$}  
\end{equation}
up to translation and its profile is close to that of a KdV
soliton. We remark that a solitary wave solution $\tilde{u}_c(n-ct)$ has small
amplitude and satisfies $dH(\tilde{u}_c)/dc>0$ if $c>c_s$ and $c$ is close to
$c_s$. See Friesecke and Wattis \cite{FW} for existence of large solitary
waves. Friesecke and Pego have proved in \cite{FP4} that small solitary wave
solutions of \eqref{eq:FPU} satisfy (H1) and are asymptotically stable in
$l^2_a$. Assuming (H2), we can prove orbital and asymptotic stability
of small solitary waves in $l^2$ exactly in the same way as Toda lattice.
\begin{theorem}
  \label{thm:2}
Suppose (H2). Let $\delta_*$ be a small positive number and let
$c_0\in(c_s,c_s+\delta_*)$ and $\tau_0\in\R$. Let $u(t)$ be a solution to
\eqref{eq:FPU} with $u(0)=u_{c_0}(\tau_0)+v_0$.
Then for every $\varepsilon>0$, there exists a $\delta>0$ satisfying the
following: 
If $\|v_0\|_{l^2}<\delta$, there exist constants $c_+>c_s$ and
$\sigma\in(c_s,c_+)$ and a $C^1$-function $x(t)$ such that
\begin{align*}
& \|u(t)-\tilde{u}_{c_0}(\cdot-x(t))\|_{l^2}< \varepsilon,\\
& \lim_{t\to\infty}\left\|u(t)-\tilde{u}_{c_+}(\cdot-x(t))
\right\|_{l^2(n\ge \sigma t)}=0,\\
& \sup_{t\in\R}\left(|c(t)-c_0|+|\dot{x}(t)-c_0|\right)=O(\|v_0\|_{l^2}),\\
& \lim_{t\to\infty}c(t)=c_+,\quad \lim_{t\to\infty}\dot{x}(t)=c_+.
\end{align*}
\end{theorem}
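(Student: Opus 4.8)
The plan is to follow the proof of Theorem~\ref{thm:1} essentially verbatim, since the Toda argument uses only the Hamiltonian structure of the equation, the exponential linear stability (H1), and the separation of the solitary wave from the dispersive part of the solution, all of which persist for \eqref{eq:FPU} under (H2). The one place where integrability of the Toda lattice really enters is in verifying (H1): in \cite{MP} this is done via the linearized B\"{a}cklund transformation. For FPU this step is unnecessary, because Friesecke and Pego \cite{FP4} have already shown that small solitary waves of \eqref{eq:FPU} satisfy (H1). Hence I would fix $\delta_*$ small enough that every $c\in(c_s,c_s+\delta_*)$ yields a solitary wave $\tilde{u}_c$ solving \eqref{eq:solitary} (existence by \cite{FP1}) for which \eqref{eq:decay} holds, and import \eqref{eq:decay} as a black box.

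First I would set up the modulational decomposition \eqref{eq:decomp}, writing $u(t)=\tilde{u}_{c(t)}(\cdot-x(t))+v(t)$ and splitting $v=v_1+v_2$, where $v_1$ solves the full nonlinear equation \eqref{eq:FPU} and carries the slow, small part of the solution, while $v_2$ is the exponentially localized remainder governed by the linearization \eqref{eq:lToda} with $H$ replaced by $H_F$. The modulation parameters $c(t)$ and $\gamma(t)$ (with $x=c\gamma$) are fixed by imposing $\la v,J^{-1}\dot{u}_{c}(\gamma)\ra=\la v_2,J^{-1}\pd_cu_c(\gamma)\ra=0$ in place of \eqref{eq:secular}; this is precisely the device that keeps the orthogonality conditions meaningful in $l^2$, since $J^{-1}\pd_cu_c$ fails to decay at $+\infty$ but is paired only with the localized $v_2$. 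Differentiating these constraints yields a closed system of modulation ODEs for $\dot{c}$ and $\dot{x}-c$ whose right-hand sides are quadratic in $(v_1,v_2)$ up to the higher-order error permitted by the $C^4$ regularity in (H2).

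The heart of the argument is then two complementary estimates, combined by a continuity (bootstrap) argument. For the localized part $v_2$ I would invoke (H1): the decay estimate \eqref{eq:decay}, together with a Duhamel representation that feeds the interaction between $\tilde{u}_c$ and the dispersive tail into the inhomogeneous term, gives exponential decay of the weighted norm $\|e^{a(\cdot-x(t))}v_2(t)\|_{l^2}$. For the slow part $v_1$ I would prove the virial lemma (Lemma~\ref{lem:monotonicity}): the linearization of \eqref{eq:FPU} about $0$ has dispersion relation $\omega(k)^2=4c_s^2\sin^2(k/2)$, so every group velocity $\omega'(k)$ has modulus at most $c_s$, strictly below the soliton speed $c>c_s$; hence a suitably weighted local mass of $v_1$ to the right of a point moving at speed $\sigma\in(c_s,c)$ is almost monotone decreasing, and $v_1$ leaks out of the soliton region. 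Feeding both estimates back into the modulation ODEs controls $|c(t)-c_0|$ and $|\dot{x}(t)-c_0|$ uniformly, shows $c(t)\to c_+$ and $\dot{x}(t)\to c_+$, and, by collecting the decay of $v_1$ and $v_2$ on $n\ge\sigma t$, yields $\|u(t)-\tilde{u}_{c_+}(\cdot-x(t))\|_{l^2(n\ge\sigma t)}\to0$.

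The main obstacle I anticipate is the virial estimate for $v_1$ in the non-integrable, merely $C^4$ setting. In the Toda case one can exploit the explicit exponential nonlinearity and the exact dispersion relation, whereas here the nonlinear contributions to the monotonicity identity must be controlled by the smallness of the soliton (guaranteed by taking $c_0$ close to $c_s$) together with $\|v\|_{l^2}$, handling the quadratic and higher terms through Taylor expansion of $V_F'$ and discrete commutator estimates. The delicate point is to choose the weight so that the positive boundary flux produced by the dispersion dominates the nonlinear error uniformly in $t$; once this is secured, the remainder is a routine transcription of the Toda argument with $e^{-r}-1+r$ replaced by $V_F(r)$ and the threshold speed $1$ replaced by $c_s$.
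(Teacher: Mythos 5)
Your proposal is correct and follows essentially the same route as the paper: the paper proves Theorem \ref{thm:2} by checking that the family of small FPU solitary waves satisfies its conditions (P1)--(P4) (convexity of $V_F$ along the wave, $C^2$ dependence of $u_c$ on $(t,c)$ in weighted norms, $dH_F(u_c)/dc\ne0$, and the exponential linear decay estimate), citing \cite{FP1,FP2,FP4}, and then reruns the proof of Theorem \ref{thm:1} verbatim --- precisely your plan of importing (H1) from \cite{FP4} and transcribing the Toda argument with the threshold speed $1$ replaced by $c_s$. The only ingredients you leave implicit, namely $dH_F(u_c)/dc\ne0$ and the $C^2$ regularity of the solitary-wave family needed for the implicit-function-theorem decomposition and the modulation equations, are handled in the paper by the same citations to \cite{FP1,FP2}, and your anticipated obstacle in the virial lemma is resolved exactly as you suggest, by smallness of $\|v_0\|_{l^2}$ and Taylor expansion of $V_F'$ near $0$.
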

\par

Our plan of the present paper is as follows. In Section \ref{sec:2}, we introduce a
variant of the secular term condition for solutions in the energy class and
some estimates that will be used later.
In Section \ref{sec:3}, we derive modulation equations of $x(t)$ and $c(t)$ and prove
\begin{equation}
  \label{eq:i2}
\dot{c}(t)=O(\|v_1(t)\|_{W}^2+\|v_2(t)\|_{X}^2)
\end{equation}
for some weighted space $W\subset l^2_a\cap l^2_{-a}$ and $X\subset l^2_a$.
On the other hand, we show that
\begin{equation}
  \label{eq:i3}
\int_0^\infty(\|v_1(t)\|_{W}+\|v_2(t)\|_X)^2dt \lesssim \|v_0\|^2_{l^2}  
\end{equation}
by using a virial lemma for $v_1(t)$ and a local energy decay estimate
(Corollary \ref{cor:1} in Section \ref{sec:2}) for $v_2(t)$.
Combining \eqref{eq:i2} and \eqref{eq:i3} with
\begin{equation}
  \label{eq:i1}
\|v(t)\|_{l^2}^2\le C(\|v_0\|_{l^2}+|c(t)-c_0|),
\end{equation}
which follows from the convexity of the Hamiltonian and the orthogonality
condition, we will prove Theorem \ref{thm:1}.
In Section \ref{sec:4}, we give a brief proof of Theorem \ref{thm:2}.
\par
Finally, let us introduce some notations.
For a Banach space $X$, we denote by $B(X)$ the space of 
all linear continuous operators from $X$ to $X$.
We use $a\lesssim b$ and $a=O(b)$ to mean that there exists a positive constant
such that $a\le Cb$.

\section{Preliminaries}
\label{sec:2}
Let $u(t)$ be a solution to \eqref{eq:Toda2} which lies in a tubular
neighborhood of $\mathcal{M}$. We decompose $u(t)$ as \eqref{eq:decomp}.
Since $\dot{u}_c=-c\pd_x\tilde{u}_c(\cdot-ct)=JH'(u_c)$,
it follows from
\eqref{eq:q} and \eqref{eq:u} that 
\begin{align*}
\frac{d}{dt}u_{c(t)}(\gamma(t))=& \dot{c}(t)\pd_c\tilde{u}_{c(t)}(n-x(t))
-\dot{x}(t)\pd_x\tilde{u}_{c(t)}(n-x(t))
\\=&  JH'(u_c(t))+ \dot{c}(t)\pd_cu_c(\gamma(t))
+\frac{\dot{x}(t)-c(t)}{c(t)}\dot{u}_{c(t)}(\gamma(t)).
\end{align*}
Thus by the definition of $v$,
\begin{equation}
  \label{eq:v}
\frac{dv}{dt}=JH''(u_{c(t)}(\gamma(t)))v(t)+l_1(t)+N_1(t),
\end{equation}
where
\begin{align*}
l_1(t)=& -\dot{c}(t)\pd_cu_{c(t)}(\gamma(t))
-\frac{\dot{x}(t)-c(t)}{c(t)}\dot{u}_{c(t)}(\gamma(t)),
\\
N_1(t)=& J\left\{H'(u_{c(t)}(\gamma(t))+v(t))-H'(u_{c(t)}(\gamma(t)))
-H''(u_{c(t)}(\gamma(t)))v(t)\right\}.
\end{align*}
Let $P_c(t)$ be a spectral projection associated with a subspace
of neutral modes $\operatorname{span}\{\dot{u}_c(t), \pd_cu_c(t)\}$
and let $Q_{c}(t)=1-P_c(t)$.
Then for $v\in l^2_a$ $(0<a<2\kappa(c))$,
$$P_c(t)v= \theta(c)\la v,J^{-1}\dot{u}_c(t)\ra \pd_cu_c(t)-\theta(c)\la v,
J^{-1}\pd_cu_c(t)\ra\dot{u}_c(t),$$
where $\theta(c)=\left(dH(u_c)/dc)\right)^{-1}$.
We remark that the projections $P_c(t)$ and $Q_c(t)$ cannot be defined on
$l^2$ because $J^{-1}\pd_cu_c$ does not decay as $n\to\infty$.
\par
Now, we decompose $v(t)$ into the sum of a small solution to
\eqref{eq:Toda2} and a remainder term that belongs to $l^2_a$ for some
$a>0$. More precisely, we put $v(t)=v_1(t)+v_2(t)$, where
\begin{equation}
  \label{eq:v1}
\left\{
  \begin{aligned}
&  \frac{dv_1}{dt}=JH'(v_1),\\
&   v_1(0)=v_0,
  \end{aligned}\right.
\end{equation}
and $v_2(t)$ is a solution to
\begin{equation}
  \label{eq:v2}
\left\{
  \begin{aligned}
& \frac{dv_2}{dt}= JH''(u_{c(t)}(\gamma(t)))v_2+l_1(t)+N_2(t),
\\ & v_2(0)=\varphi_{c_0}(\tau_0)-\varphi_{c(0)}(\gamma(0)),
  \end{aligned}\right.
\end{equation}
where $N_2(t)=N_1(t)-JH'(v_1(t))+JH''(u_{c(t)}(\gamma(t)))v_1$.
To fix the decomposition, we will impose the constraint
\begin{align}
\label{eq:orth1}
& \la v(t), J^{-1}\dot{u}_{c(t)}(\gamma(t))\ra=0,\\
\label{eq:orth2}
& \la v_2(t),J^{-1}\pd_cu_{c(t)}(\gamma(t))\ra=0.
\end{align}
We remark that $u(t)-v_1(t)$ remains in $l^2_a$ for every $0\le a<2\kappa(c_0)$
and $t\in\R$. More precisely, we have the following.
\begin{proposition}
  \label{prop:1}
Let $c_0>1$, $\tau_0\in\R$ and $v_0\in l^2$.
Let $u(t)$ be a solution to \eqref{eq:Toda2} satisfying
$u(0)=u_{c_0}(\tau_0)+v_0$ and let $v_1(t)$ be a solution to \eqref{eq:v1}.
Then
$$u(t)\in C^2(\R;l^2)\quad\text{and}\quad u(t)-v_1(t)\in C^2(\R;l^2_a)
\quad\text{for $0\le a<2\kappa(c_0)$.}$$
\end{proposition}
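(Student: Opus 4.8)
The plan is to treat the difference $w(t):=u(t)-v_1(t)$ as the solution of a \emph{linear} lattice wave equation with bounded, solution-dependent coefficients, and to propagate the exponential spatial localization of its initial datum by a weighted energy estimate. The two assertions split accordingly: the $l^2$-statement is global well-posedness of the Toda flow, while the $l^2_a$-statement is a finite–propagation-speed localization property of $w$.

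First I would dispose of the $l^2$-statement. Writing $u=(r,p)$, equation \eqref{eq:Toda2} reads $\dot r=(e^\pd-1)p$ and $\dot p=(1-e^{-\pd})V'(r)$ with $V'(r)=1-e^{-r}$, and the vector field $u\mapsto JH'(u)$ is locally Lipschitz on $l^2\times l^2$ because $s\mapsto 1-e^{-s}$ is smooth with $|1-e^{-s}|\lesssim_M|s|$ for $|s|\le M$ and $\|r\|_{l^\infty}\le\|r\|_{l^2}$. This gives a unique local $C^1$ solution. Conservation of $H$ together with $V\ge0$ yields $\|p(t)\|_{l^2}^2\le 2H(u(0))$, and then $\dot r=(e^\pd-1)p$ forces $\|r(t)\|_{l^2}\le\|r(0)\|_{l^2}+2\sqrt{2H}\,|t|$, an at most linear growth that rules out finite-time blow-up and produces a global $C^1$ solution; differentiating once more (the coefficient $V''(r)=e^{-r}$ being continuous) upgrades this to $C^2(\R;l^2)$. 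The same argument applies to $v_1$, so $u,v_1\in C^2(\R;l^2)$ and hence $w\in C^2(\R;l^2)$; in particular $\|r_u\|_{l^\infty}$ and $\|r_{v_1}\|_{l^\infty}$ are bounded on each compact time interval.

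Next I would subtract \eqref{eq:v1} from \eqref{eq:Toda2} to get the equation for $w=(r_w,p_w)$. Its first component is $\dot r_w=(e^\pd-1)p_w$, while the second uses the algebraic identity $V'(r_u)-V'(r_{v_1})=e^{-r_{v_1}}-e^{-r_u}=e^{-r_{v_1}}(1-e^{-r_w})=a(t,\cdot)\,r_w$, where $a(t,n)=e^{-r_{v_1}(t,n)}\bigl(1-e^{-r_w(t,n)}\bigr)/r_w(t,n)$ is bounded in $l^\infty$, uniformly on compact time intervals, by the $l^2$-bounds above. Thus $w$ solves a linear nearest-neighbour system $\dot w=\mathcal A(t)w$ with $\mathcal A(t)w=\bigl((e^\pd-1)p_w,(1-e^{-\pd})(a\,r_w)\bigr)$ and bounded coefficients, and its datum is the bare soliton $w(0)=u(0)-v_0=u_{c_0}(\tau_0)$, which decays like $e^{-2\kappa(c_0)|n-c_0\tau_0|}$ and hence lies in $l^2_a$ precisely for $a<2\kappa(c_0)$. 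The heart of the matter is then a weighted energy estimate: I would test this equation against $e^{2an}w$, sum over $n\in\N$, and sum by parts. The shift operators satisfy $\|e^{\pm\pd}f\|_{l^2_a}=e^{\mp a}\|f\|_{l^2_a}$ and multiplication by $a(t,\cdot)$ is bounded on $l^2_a$, while the commutator produced by the weight contributes terms of size $(e^{2a}-1)=O(a)$ times the weighted energy and summation by parts at the left endpoint of $\N$ creates a boundary flux controlled by $\|w(t)\|_{l^2}^2$, which is already finite. Altogether this gives $\tfrac{d}{dt}\|w(t)\|_{l^2_a}^2\le C(t)\bigl(\|w(t)\|_{l^2_a}^2+\|w(t)\|_{l^2}^2\bigr)$ with $C(t)$ locally bounded, so Gronwall keeps $\|w(t)\|_{l^2_a}$ finite on every compact interval. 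Reading $\dot w=\mathcal A(t)w$ in $l^2_a$ then gives $\dot w\in C(\R;l^2_a)$, and one more differentiation (the coefficients being $C^1$ in $t$) promotes the regularity to $C^2(\R;l^2_a)$.

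The step I expect to be delicate is the bookkeeping in the weighted estimate, and in particular the reason localization is preserved only toward $+\infty$, i.e.\ in the one-sided norm over $\N$, and not toward $-\infty$. This is a finite–propagation-speed phenomenon: the coupling is nearest-neighbour, so the weight $e^{2an}$ growing to the right keeps the commutator terms controllable and the soliton, moving faster than any dispersive wave, keeps the region ahead of it exponentially clean, whereas trailing radiation behind it obstructs decay as $n\to-\infty$. Technically, the only point requiring real care is absorbing the left-endpoint boundary flux by the global $l^2$ bound on $w$; this is exactly what decouples the $l^2_a$ estimate from the nonlinear $l^2$ theory and closes the argument, and it is noteworthy that no spectral information about the linearized soliton operator is needed, since Gronwall only requires finiteness rather than decay.
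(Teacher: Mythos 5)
Your proposal is correct, and its skeleton is the same as the paper's: both isolate $w=u-v_1$, observe that it solves the \emph{linear} system $\dot w=JF(u,v_1)w$ with coefficient matrix $F(u,v_1)=\operatorname{diag}\bigl(\tfrac{V'(r)-V'(r_1)}{r-r_1},\,1\bigr)$ --- your $a(t,n)$ is exactly this difference quotient --- which is bounded (indeed $C^1$ in $t$) as an operator on $l^2_a$, and both feed it the datum $w(0)=u_{c_0}(\tau_0)$, which lies in $l^2_a$ precisely for $a<2\kappa(c_0)$. Where you genuinely diverge is in the handling of the two black boxes: the paper simply cites \cite{FP2} for global $C^2(\R;l^2)$ well-posedness of $u$ and $v_1$, and \cite[Appendix A]{FP2} for unique solvability of the linear system in $C^2(\R;l^2\cap l^2_a)$, whereas you prove both from scratch --- coercivity of $H$ plus $\dot r=(e^\pd-1)p$ for global existence, and a weighted energy/Gronwall estimate for persistence of the $l^2_a$ norm. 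This buys self-containedness at the cost of one technical circularity: pairing the equation with $e^{2an}w$ and summing presupposes the finiteness of $\|w(t)\|_{l^2_a}$ that you are trying to establish. The routine fix is to run your estimate with truncated weights $e^{2a\min(n,N)}$, obtain bounds uniform in $N$, and let $N\to\infty$; alternatively --- simpler, and in effect what the paper's citation delivers --- note that your $\mathcal A(t)$ is continuous in $t$ with values in $B(l^2\cap l^2_a)$, so linear Picard iteration produces a unique global solution in $C^1(\R;l^2\cap l^2_a)$ coinciding with $w$ by uniqueness in $l^2$, and no Gronwall argument is needed at all; $C^2$ then follows by differentiating the equation, as you say. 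One conceptual remark: your closing heuristic about localization being preserved ``only toward $+\infty$'' misreads the situation. Since $u_{c_0}(\tau_0)$ decays at \emph{both} ends, it lies in $l^2_{-a}$ as well, and the identical argument (or the Picard one) propagates the $l^2_{-a}$ norm too; the trailing radiation is carried by $v_1$, not by $w$, so nothing obstructs two-sided localization of $w$ on compact time intervals. The proposition asserts only the $l^2_a$ half because that is all the subsequent stability argument uses, not because decay at $-\infty$ fails.
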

\begin{proof}
By \cite{FP2}, we have $u$, $v_1\in C^2(\R;l^2)$.
Let $v_3(t)=u(t)-v_1(t)$. Then $v_3(0)\in \cap_{0\le a<2\kappa(c_0)} l^2_a$
and 
\begin{equation}
  \label{eq:v31}
\frac{dv_3}{dt}=J(H'(u)-H'(v_1)).  
\end{equation}
Let $u(t)={}^t\!(r(t),p(t))$, $v_1(t)={}^t\!(r_1(t),p_1(t))$ and let
$$F(u,v_1)=
\begin{pmatrix}
  \frac{V'(r)-V'(r_1)}{r-r_1} & 0 \\ 0 & 1 \end{pmatrix}.$$
Then we have $F(u,v_1)\in C^1(\R;B(l^2_a))$ for every $a\in[0,2\kappa(c_0))$
and \eqref{eq:v31} can be rewritten as 
\begin{equation}
  \label{eq:v32}
\frac{dv_3}{dt}=JF(u,v_1)v_3.
\end{equation}
By \cite[Appendix A]{FP2}, we see that there exists a unique solution 
$v_3\in C^2(\R;l^2\cap l^2_a)$ to \eqref{eq:v32}
for every $a\in[0,2\kappa(c_0))$. Thus we prove $u-v_1\in C^2(\R;l^2_a)$
for every $a\in[0,2\kappa(c_0))$.
\end{proof}
If $u(t)$ and $u(t)-v_1(t)$ lie in a tubular neighborhood of a solitary
wave in $l^2$ and $l^2_a$ respectively, we can find modulation parameters
$c(t)$ and $\gamma(t)$ satisfying \eqref{eq:orth1} and \eqref{eq:orth2}.
\begin{lemma}
 \label{lem:decomp}
Let $c_0>1$, $\tau_0\in\R$, $\gamma_0(t)=t+\tau_0$ and $a\in(0,2\kappa(c_0))$.
Let $u(t)$ be a solution to \eqref{eq:Toda2} and let $v_1(t)$ be a solution to
\eqref{eq:v1}. Then there exist positive numbers $\delta_0$ and $\delta_1$
satisfying the following: If
$$\sup_{t\in[T_1,T_2]}\left(\|u(t)-u_{c_0}(\gamma_0(t))\|_{l^2}
+e^{-ac_0\gamma_0(t)}\|u(t)-u_{c_0}(\gamma_0(t))-v_1(t)\|_{l^2_a}\right)
<\delta_0$$
for some $0\le T_1\le T_2\le\infty$,
there exists $(c(t),\gamma(t))\in C^2([T_1,T_2];\R^2)$ satisfying
\eqref{eq:decomp}, \eqref{eq:orth1}, \eqref{eq:orth2} and
$$\sup_{t\in[T_1,T_2]}\left(|\gamma(t)-\gamma_0(t)|+|c(t)-c_0|\right)
<\delta_1.$$
Especially, it holds
$|c(0)-c_0|+|\gamma(0)-\tau_0|=O(\|v_0\|_{l^2}).$
\end{lemma}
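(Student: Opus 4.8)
The plan is to determine $(c(t),\gamma(t))$ by solving the two scalar constraints \eqref{eq:orth1}--\eqref{eq:orth2} through a quantitative implicit function theorem applied at each fixed $t$, keeping all constants uniform in $t$. Write $U(c,\gamma)=u_c(\gamma)=\tilde u_c(\cdot-c\gamma)$, so that $\pd_\gamma U=\dot u_c(\gamma)$ and $\pd_c U=\pd_c u_c(\gamma)$, and set
\[
F_1(c,\gamma,t)=\la u(t)-U(c,\gamma),\,J^{-1}\dot u_c(\gamma)\ra,\qquad
F_2(c,\gamma,t)=\la u(t)-U(c,\gamma)-v_1(t),\,J^{-1}\pd_c u_c(\gamma)\ra .
\]
Since $v=u-U$ and $v_2=u-U-v_1$, the equations $F_1=F_2=0$ are exactly \eqref{eq:orth1} and \eqref{eq:orth2}. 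The first pairing is harmless: $J^{-1}\dot u_c=H'(u_c)$ decays in both directions, so $F_1$ is a continuous functional on $l^2$. The second is the delicate one, because $J^{-1}\pd_c u_c$ only lies in $l^2_{-a}$ and tends to a nonzero constant as $n\to+\infty$; this is why its companion factor must be measured in $l^2_a$, and is precisely the reason the hypothesis controls $u-u_{c_0}(\gamma_0)-v_1$ in a weighted norm and why \eqref{eq:orth2} is imposed on $v_2$ rather than on $v$.

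First I would verify that at the base point $(c,\gamma)=(c_0,\gamma_0(t))$ both functionals are of size $O(\delta_0)$ uniformly in $t$. For $F_1$ this is immediate from $\|u-u_{c_0}(\gamma_0)\|_{l^2}<\delta_0$ and $\|H'(u_{c_0}(\gamma_0))\|_{l^2}=O(1)$. For $F_2$ I pair the $l^2_a$ factor against the $l^2_{-a}$ factor and use that any soliton-centered profile has $l^2_{-a}$ norm $\sim e^{-ac_0\gamma_0(t)}$; the normalizing weight $e^{-ac_0\gamma_0(t)}$ built into the hypothesis is exactly what turns this into $|F_2|\lesssim e^{-ac_0\gamma_0}\|u-u_{c_0}(\gamma_0)-v_1\|_{l^2_a}=O(\delta_0)$. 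The same mechanism --- cancellation of the growth $e^{+ac\gamma}$ of soliton-centered quantities in $l^2_a$ against the decay $e^{-ac\gamma}$ of $J^{-1}\pd_c u_c$ in $l^2_{-a}$ --- shows that $F_1,F_2$ and all their $(c,\gamma)$-derivatives are $C^2$ and uniformly bounded over the parameter ball, once one controls the soliton differences $u_{c_0}(\gamma_0)-u_c(\gamma)$ in $l^2_a$. I expect this uniform weighted bookkeeping to be the main obstacle.

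Next I would compute the Jacobian $D_{(c,\gamma)}(F_1,F_2)$ at the base point, where the $O(\delta_0)$ remainders coming from $u-U$ and $u-U-v_1$ may be discarded. Using $\dot u_c=JH'(u_c)$, hence $J^{-1}\dot u_c=H'(u_c)$, together with the skew-adjointness of $J$ and $J^{-1}$, two of the leading terms vanish, $\la\dot u_c,J^{-1}\dot u_c\ra=\la JH'(u_c),H'(u_c)\ra=0$ and $\la\pd_c u_c,J^{-1}\pd_c u_c\ra=0$, while the surviving ones are $\la\pd_c u_c,J^{-1}\dot u_c\ra=\la\pd_c u_c,H'(u_c)\ra=dH(u_c)/dc$ and $\la\dot u_c,J^{-1}\pd_c u_c\ra=-\,dH(u_c)/dc$. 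Thus the leading Jacobian is
\[
\begin{pmatrix} -\,dH(u_c)/dc & 0 \\ 0 & dH(u_c)/dc \end{pmatrix}+O(\delta_0),
\]
whose determinant is $-(dH(u_c)/dc)^2+O(\delta_0)\ne0$ because $dH(u_c)/dc>0$ (as noted after Theorem \ref{thm:1}) and $\delta_0$ is small; so the derivative is invertible with inverse bounded uniformly in $t$.

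With $F_1,F_2=O(\delta_0)$ and a uniformly invertible derivative, a quantitative implicit function theorem (equivalently a contraction estimate for the associated Newton map) produces, for each $t\in[T_1,T_2]$, a unique solution $(c(t),\gamma(t))$ of $F_1=F_2=0$ in the ball $|c-c_0|+|\gamma-\gamma_0(t)|<\delta_1=O(\delta_0)$. Joint $C^2$ regularity of $G(c,\gamma,t)=(F_1,F_2)$ --- which follows from $u\in C^2(\R;l^2)$ and $u-v_1\in C^2(\R;l^2_a)$ (Proposition \ref{prop:1}) together with smoothness of the soliton family --- upgrades the solution to a $C^2$ map $t\mapsto(c(t),\gamma(t))$, and uniqueness in the ball patches the local solutions into one defined on all of $[T_1,T_2]$. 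Finally, at $t=0$ we have $\gamma_0(0)=\tau_0$, $u(0)-u_{c_0}(\tau_0)=v_0$ and $u(0)-u_{c_0}(\tau_0)-v_1(0)=v_0-v_0=0$, so there $F_1=O(\|v_0\|_{l^2})$ and $F_2=0$; the implicit function theorem bound then yields $|c(0)-c_0|+|\gamma(0)-\tau_0|=O(\|v_0\|_{l^2})$.
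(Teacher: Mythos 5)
Your proposal is correct and follows essentially the same route as the paper: the same two functionals $F_1,F_2$ encoding \eqref{eq:orth1}--\eqref{eq:orth2} with the weighted pairing of $u-u_c(\gamma)-v_1\in l^2_a$ against $J^{-1}\pd_cu_c\in l^2_{-a}$, the same nondegenerate Jacobian $-(dH(u_{c_0})/dc)^2+O(\delta_0)$, and the same $C^2$ regularity via Proposition \ref{prop:1}. The only (cosmetic) difference is that the paper applies the implicit function theorem once on the Banach-space neighborhood $U(\delta_0)\subset l^2\times l^2_a$ to get a $C^2$ map $\Phi(u,\tilde{u})=(c,\gamma)$ and then composes with $t\mapsto(u(t),u(t)-v_1(t))$, whereas you run a quantitative pointwise-in-$t$ version with uniform constants and patch by uniqueness.
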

\begin{proof}
  Put
\begin{gather}
F_1(u,\tilde{u},c,\gamma)
:=\la u-u_c(\gamma),J^{-1}\dot{u}_c(\gamma)\ra,\\
F_2(u,\tilde{u},c,\gamma)
:=\la \tilde{u}-u_c(\gamma),J^{-1}\pd_cu_c(\gamma))\ra.
\end{gather}
Then
\begin{align*}
\frac{\pd(F_1,F_2)}{\pd(c,\gamma)}(u_{c_0}(\gamma_0),u_{c_0}(\gamma_0),
c_0,\gamma_0)
= -\left(\frac{d}{dc}H(u_{c_0})\right)^2\ne0.  
\end{align*}
Let 
$U(\delta_0)=$$\{(u,\tilde{u})\in l^2\times l^2_a: 
\|u-u_c(\gamma_0)\|_{l^2}+
e^{-ac\gamma_0}\|\tilde{u}-u_c(\gamma_0)\|_{l^2_a}<\delta_0\}$ and
$B(\delta_1):=\{(c,\gamma)\in\R^2:|c-c_0|+|\gamma-\gamma_0|<\delta_1\}$.
Using the implicit function theorem, we see that there exists positive numbers
$\delta_0$ and $\delta_1$ and a mapping
$$\Phi:U(\delta_0)\ni (u,\tilde{u})\mapsto (c,\gamma)\in B(\delta_1)$$
satisfying 
$F_1(u,\tilde{u},\Phi(u,\tilde{u}))=F_2(u,\tilde{u},\Phi(u,\tilde{u}))=0$.
Since $F_1$ and $F_2$ are $C^2$ in $(u,\tilde{u},\gamma,c)\in
U(\delta_0)\times B(\delta_1)$, we have
$\Phi\in C^2(U(\delta_0))$.
\par
Let $(c(t),\gamma(t))=\Phi(u(t),u(t)-v_1(t))$ for $t\in[T_1,T_2]$.
Then $c(t)$ and $\gamma(t)$ satisfy \eqref{eq:orth1} and \eqref{eq:orth2}
and are of class $C^2$ because $\Phi\in C^2(U(\delta_0))$ and 
$(u(t),u(t)-v_1(t))\in C^2(\R;U(\delta_0))$.
Furthermore, we have 
\begin{align*}
& |c(t)-c_0|+|\gamma(t)-\gamma_0(t)|
\\ \lesssim &
\|u(t)-u_{c_0}(\gamma_0(t))\|_{l^2}+
e^{-ac_0\gamma(t)}\|u(t)-u_{c_0}(\gamma_0(t))-v_1(t)\|_{l^2_a}.
\end{align*}
Especially for $t=0$, we have
$|c(0)-c_0|+|\gamma(0)-\tau_0|=O(\|v_0\|_{l^2}).$
This completes the proof of Lemma \ref{lem:decomp}.
\end{proof}

To estimate the exponentially decaying part of a solution, we will use the
following decay estimate for non-autonomous linearized equations.
\begin{lemma}[\cite{FP3,MP}]
  \label{lem:linear-est}
Let $c_0>1$, $a\in(0,2\kappa(c_0))$ and $b(a):=ca-2\sinh(a/2)$.
Let $U_0(t,\tau)\varphi$ be a solution to
\begin{equation}
  \label{eq:LToda}
\left\{
  \begin{aligned}
& \frac{dv}{dt}=JH''(u_{c_0})v.\\
& v(\tau)=\varphi.
  \end{aligned}
\right.
\end{equation}
Then for every $b\in(0,b(a))$, there exists a positive number $K$ such that
for every $\varphi\in l^2_a$ and $t\ge \tau$,
$$e^{-ac_0(t-\tau)}\|U_0(t,\tau)Q_c(\tau)\varphi\|_{l^2_a}
\le Ke^{-b(t-\tau)}\|\varphi\|_{l^2_a}.$$\qed
\end{lemma}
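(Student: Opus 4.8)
The plan is to deduce the estimate from a spectral bound for an \emph{autonomous} generator obtained by conjugating \eqref{eq:LToda} with the moving exponential weight, the threshold $b(a)$ being read off from the symbol of the background evolution. First I would pass to the comoving weighted frame. Writing $v(t)=U_0(t,\tau)Q_c(\tau)\varphi$ and $w(t,n)=e^{a(n-c_0t)}v(t,n)$, one has $\|w(t)\|_{l^2}=e^{-ac_0t}\|v(t)\|_{l^2_a}$, so the asserted bound is equivalent to $\|w(t)\|_{l^2}\le Ke^{-b(t-\tau)}\|w(\tau)\|_{l^2}$ with $w(\tau)$ in the range of the conjugate of $Q_c(\tau)$. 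Since $\tfrac{d}{dt}e^{a(n-c_0t)}=-ac_0e^{a(n-c_0t)}$ and $e^{an}Je^{-an}$ turns the shifts $e^{\pm\pd}$ into $e^{\mp a}e^{\pm\pd}$, the function $w$ solves $\dot w=(-ac_0I+\tilde J H''(\tilde u_{c_0}(\cdot-c_0t)))w$, where $\tilde J=e^{an}Je^{-an}$. Because the profile $\tilde u_{c_0}(n-c_0t)$ is stationary in the comoving frame, this evolution is autonomous up to the non-integer translation $c_0t$; for the Toda lattice I would make it genuinely autonomous through the explicit B\"{a}cklund conjugation of \cite{MP}, and in general through the covariance of $U_0$ under lattice shifts together with a Laplace transform in $t$ as in \cite{FP3}.

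Next I would locate the spectrum. At the background state $u=0$ one has $H''(0)=I$, so the free generator $A_0=-ac_0I+\tilde J$ has Fourier symbol $-ac_0I+M(\theta)$ with $M(\theta)=\begin{pmatrix} 0 & e^{-a}e^{i\theta}-1\\ 1-e^{a}e^{-i\theta} & 0\end{pmatrix}$. The eigenvalues $\mu$ of $M(\theta)$ satisfy
$$\mu^2=e^{-a}e^{i\theta}+e^{a}e^{-i\theta}-2=4\sinh^2\big(\tfrac{a-i\theta}{2}\big),$$
so $\mu=\pm2\sinh\big(\tfrac{a-i\theta}{2}\big)$ and $\Re\mu=\pm2\sinh(a/2)\cos(\theta/2)$. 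Hence the essential spectrum of $A_0$ has maximal real part $-ac_0+2\sinh(a/2)=-b(a)$, which is exactly where the threshold $b(a)=c_0a-2\sinh(a/2)$ comes from; note $b(a)>0$ since $c_0>1$.

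The potential $J(H''(\tilde u_{c_0}(\cdot-c_0t))-H''(0))$ is exponentially localized, hence a relatively compact perturbation on $l^2_a$, so it leaves the essential spectrum unchanged and can only add discrete eigenvalues. The substance of \eqref{eq:decay} (equivalently of \cite{MP,FP3}) is that the only spectrum of the full comoving generator in $\{\Re\lambda>-b(a)\}$ is the eigenvalue $0$ with generalized eigenspace $\operatorname{span}\{\dot u_{c_0},\pd_cu_{c_0}\}$, and the projection $Q_c$ removes precisely this space. With the neutral modes gone, a Gearhart--Pr\"{u}ss type resolvent bound in the strip yields $\|w(t)\|_{l^2}\lesssim e^{-b(t-\tau)}\|w(\tau)\|_{l^2}$ for every $b<b(a)$, and undoing the change of variables gives the lemma.

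The hard part is twofold. The first difficulty is the non-autonomous character caused by the non-integer shift $c_0t$, which obstructs a direct appeal to resolvent estimates; resolving it is exactly what the B\"{a}cklund transformation of \cite{MP} and the Laplace-transform analysis of \cite{FP3} accomplish. The second, and deeper, difficulty is excluding eigenvalues of the comoving generator in the strip $-b(a)<\Re\lambda\le0$ other than the neutral modes at $0$: this is the eigenvalue count supplied by integrability in \cite{MP} and by the KdV approximation in \cite{FP3}, and it is what ultimately certifies the sharp rate $b(a)$.
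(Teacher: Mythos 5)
Your sketch is correct and matches the paper's own treatment of this lemma, which is simply to quote it from \cite{FP3,MP} without proof: the weighted conjugation $w=e^{a(n-c_0t)}v$, the free-symbol computation giving the essential-spectral bound $-ac_0+2\sinh(a/2)=-b(a)$ (this is indeed the source of the rate; note only that positivity of $b(a)$ on the whole stated range uses $a<2\kappa(c_0)$, since $b(2\kappa(c_0))=0$, not merely $c_0>1$), and the delegation of the two genuine difficulties --- the non-integer shift $c_0t$ making the comoving problem only Howland--Floquet periodic rather than autonomous, and the exclusion of spectrum in the strip beyond the neutral modes removed by $Q_c$ --- to the linearized B\"{a}cklund conjugation of \cite{MP} and the Floquet/resolvent analysis of \cite{FP3} is exactly where those cited works do the real work. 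All of your self-contained computations (the conjugated shifts $e^{\mp a}e^{\pm\pd}$, $\mu=\pm2\sinh((a-i\theta)/2)$, and the equivalence of the asserted estimate with $\|w(t)\|_{l^2}\le Ke^{-b(t-\tau)}\|w(\tau)\|_{l^2}$) check out.
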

Now let $\gamma=\gamma(t)$ be a $C^1$-function and let $U(t,\tau)v_0$ be a solution to
\begin{equation}
  \label{eq:p-LToda}
\left\{
  \begin{aligned}
& \frac{dv}{dt}=\dot{\gamma}JH''(u_{c_0}(\gamma))v,\\
& v(\tau)=\varphi.
  \end{aligned}
\right.
\end{equation}
If a modulation parameter $\gamma(t)$ is an increasing function and 
$\dot{\gamma}(t)$ is bounded away from $0$, we have the following.
\begin{corollary}
  \label{cor:1}
Let $c_0$, $a$, $b$ and $K$ be as in Lemma \ref{lem:linear-est}
and let $0\le T\le\infty$.
Suppose $\inf_{t\in[0,T]}\dot{\gamma}(t)\ge 1/2$, $\varphi\in l^2_a$
and
$ \la\varphi,J^{-1}\dot{u}_{c_0}(\gamma(\tau))\ra
=\la\varphi,J^{-1}\pd_cu_{c_0}(\gamma(\tau))\ra=0.$
Then
$$
\|U(t,\tau)\varphi\|_{X(t)}
\le Ke^{-b(t-\tau)/2}\|\varphi\|_{X(\tau)}
\quad\text{for $0\le \tau\le t\le T$,}$$
where $\|v\|_{X(t)}:=e^{-ac_0\gamma(t)}\|v\|_{l^2_a}$.
\end{corollary}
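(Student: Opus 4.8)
The plan is to recognize that \eqref{eq:p-LToda} is nothing but the linearized equation \eqref{eq:LToda} written in a reparametrized time, and then to read off the decay directly from Lemma \ref{lem:linear-est}. Since $\dot\gamma(t)\ge 1/2>0$ on $[0,T]$, the function $\gamma$ is strictly increasing and hence invertible there. I would introduce the new time variable $s=\gamma(t)$ and set $w(s)=v(\gamma^{-1}(s))$, where $v(t)=U(t,\tau)\varphi$. A direct computation using $ds=\dot\gamma\,dt$ turns \eqref{eq:p-LToda} into $dw/ds=JH''(u_{c_0}(s))w$, which is exactly \eqref{eq:LToda}; matching initial data at $s=\gamma(\tau)$ gives the identification
$$U(t,\tau)\varphi=U_0(\gamma(t),\gamma(\tau))\varphi,\qquad 0\le\tau\le t\le T.$$

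Next I would translate the orthogonality hypothesis into a projection statement. Recalling the formula for $P_c(t)$ from the start of Section \ref{sec:2}, the two conditions $\la\varphi,J^{-1}\dot u_{c_0}(\gamma(\tau))\ra=\la\varphi,J^{-1}\pd_cu_{c_0}(\gamma(\tau))\ra=0$ say precisely that $P_{c_0}(\gamma(\tau))\varphi=0$, i.e. $Q_{c_0}(\gamma(\tau))\varphi=\varphi$. Thus Lemma \ref{lem:linear-est}, applied with the pair of times $(\gamma(\tau),\gamma(t))$ in place of $(\tau,t)$, yields
$$e^{-ac_0(\gamma(t)-\gamma(\tau))}\|U_0(\gamma(t),\gamma(\tau))\varphi\|_{l^2_a}\le Ke^{-b(\gamma(t)-\gamma(\tau))}\|\varphi\|_{l^2_a}.$$

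The moving weight $\|\cdot\|_{X(t)}=e^{-ac_0\gamma(t)}\|\cdot\|_{l^2_a}$ is designed exactly so that the growth factor $e^{ac_0(\gamma(t)-\gamma(\tau))}$ coming from the lemma is absorbed: multiplying the displayed inequality by $e^{-ac_0\gamma(\tau)}$ and using the identification above gives
$$\|U(t,\tau)\varphi\|_{X(t)}=e^{-ac_0\gamma(t)}\|U_0(\gamma(t),\gamma(\tau))\varphi\|_{l^2_a}\le Ke^{-b(\gamma(t)-\gamma(\tau))}\|\varphi\|_{X(\tau)}.$$
Finally, since $\dot\gamma\ge 1/2$ on $[0,T]$ we have $\gamma(t)-\gamma(\tau)=\int_\tau^t\dot\gamma(s)\,ds\ge(t-\tau)/2$, and because $b>0$ this converts $e^{-b(\gamma(t)-\gamma(\tau))}$ into the desired $e^{-b(t-\tau)/2}$, completing the argument.

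As for the main obstacle: the estimate itself is essentially bookkeeping, so the only points requiring care are that the time change is legitimate (which needs $\gamma$ strictly increasing, guaranteed by $\dot\gamma\ge1/2$) and, more importantly, that the orthogonality is imposed at the correct time $\gamma(\tau)$ so that $Q_{c_0}(\gamma(\tau))\varphi=\varphi$ holds. Verifying that the growth weight in Lemma \ref{lem:linear-est} is cancelled \emph{exactly} by the definition of $X(t)$, rather than merely up to a harmless constant, is the one computation worth carrying out slowly.
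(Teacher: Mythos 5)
Your proof is correct and takes essentially the same route as the paper: the change of variables $s=\gamma(t)$ reduces \eqref{eq:p-LToda} to the autonomous equation \eqref{eq:LToda}, your identification $Q_{c_0}(\gamma(\tau))\varphi=\varphi$ is precisely the paper's observation that the solution lies in $\operatorname{Range}Q_{c_0}(s)$, and Lemma \ref{lem:linear-est} together with $\gamma(t)-\gamma(\tau)\ge(t-\tau)/2$ gives the stated bound. The explicit weight bookkeeping you carry out (multiplying by $e^{-ac_0\gamma(\tau)}$ so that the $e^{ac_0(\gamma(t)-\gamma(\tau))}$ growth factor is absorbed exactly into the moving norm $\|\cdot\|_{X(t)}$) is exactly the paper's one-line estimate, just written out in more detail.
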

\begin{proof}
Let $s=\gamma(t)$, $\tau_1=\gamma(\tau)$ and $\tilde{v}(s)=v(\gamma^{-1}(s))$.
Then for $s\in[0,\gamma(T)]$,
$$ \frac{d\tilde{v}}{ds}=JH''(u_{c_0})\tilde{v}
\quad\text{and}\quad v(s)\in\operatorname{Range}Q_{c_0}(s).
$$
\par

Lemma \ref{lem:linear-est} and the fact that $\dot{\gamma}(t)\ge 1/2$
imply
\begin{align*}
\|v(t)\|_{X(t)}=
e^{-ac_0s}\|\tilde{v}(s)\|_{l^2_a}\le &
Ke^{-b(s-\tau_1)-ac_0\tau_1}\|\varphi\|_{l^2_a}
\\  \le &
Ke^{-b(t-\tau)/2}e^{-ac_0\gamma(\tau)}\|\varphi\|_{l^2_a}.  
\end{align*}
This completes the proof of Corollary \ref{cor:1}.
\end{proof}

We can estimate $\|v(t)\|_{l^2}$ by applying an argument from \cite{FP2} that
uses the convexity of Hamiltonian and the orthogonality condition
\eqref{eq:orth1}.
\begin{lemma}
  \label{lem:speed-Hamiltonian}
Let $u(t)$ be a solution to \eqref{eq:Toda2} satisfying
$u(0)=u_{c_0}(\tau_0)+v_0$. Then there exist positive numbers $\delta_2$ and
$C$ satisfying the following:
Suppose there exists $T\in[0,\infty]$ such that
$v(t)$ satisfies \eqref{eq:decomp} and \eqref{eq:orth1} for $t\in[0,T]$ 
and $\sup_{t\in[0,T]}|c(t)-c_0|+\|v_0\|_{l^2}\le \delta_2$. Then
\begin{equation}
  \label{eq:enorm}
\|v(t)\|_{l^2}^2\le C(|c(t)-c_0|+\|v_0\|_{l^2})\quad\text{for $t\in[0,T]$.}
\end{equation}
\end{lemma}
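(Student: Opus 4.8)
The plan is to use conservation of the Hamiltonian $H$ together with a coercivity estimate for its second variation at the soliton, using the orthogonality \eqref{eq:orth1} to annihilate the first variation. First I would record the exact algebraic identity, valid for all $a,b\in\R$,
$$V(a+b)-V(a)-V'(a)b=e^{-a}V(b),$$
which is immediate from $V(r)=e^{-r}-1+r$. Writing $v=(v_1,v_2)$ and $u_{c(t)}(\gamma(t))=(\tilde r_{c(t)}(\cdot-x(t)),\tilde p_{c(t)}(\cdot-x(t)))$ and summing over $n$, the potential terms combine via this identity while the kinetic terms give $\tfrac12 v_2^2$, so that
$$H(u(t))-H(u_{c(t)}(\gamma(t)))-\la H'(u_{c(t)}(\gamma(t))),v(t)\ra=\sum_{n\in\Z}\Bigl(\tfrac12 v_2(t,n)^2+e^{-\tilde r_{c(t)}(n-x(t))}V(v_1(t,n))\Bigr),$$
where both terms on the right are nonnegative.

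Next I would observe that the linear term vanishes. Since $\dot u_c=JH'(u_c)$ we have $H'(u_{c(t)}(\gamma(t)))=J^{-1}\dot u_{c(t)}(\gamma(t))$, so $\la H'(u_{c(t)}(\gamma(t))),v(t)\ra=\la v(t),J^{-1}\dot u_{c(t)}(\gamma(t))\ra=0$ by \eqref{eq:orth1}. Hence the left-hand side reduces to the energy difference $H(u(t))-H(u_{c(t)}(\gamma(t)))$, which I would bound from above. Conservation of $H$ along \eqref{eq:Toda2} and the local Lipschitz continuity of $H$ near $u_{c_0}(\tau_0)$ give $H(u(t))=H(u(0))=H(u_{c_0}(\tau_0))+O(\|v_0\|_{l^2})$, while translation invariance makes $H(u_c(\gamma))=H(u_c)$ depend on $c$ alone; since $\tfrac{d}{dc}H(u_c)>0$, the mean value theorem yields $H(u_{c(t)}(\gamma(t)))=H(u_{c_0})+O(|c(t)-c_0|)$. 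Subtracting,
$$\sum_{n\in\Z}\Bigl(\tfrac12 v_2(t,n)^2+e^{-\tilde r_{c(t)}(n-x(t))}V(v_1(t,n))\Bigr)=O(\|v_0\|_{l^2})+O(|c(t)-c_0|).$$

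The main obstacle is converting the left-hand side into a lower bound of the form $\mu'\|v(t)\|_{l^2}^2$, and this is exactly where the energy space is delicate. The $v_2$-part already contributes $\tfrac12\|v_2(t)\|_{l^2}^2$, and boundedness of the profile $\tilde r_c$ gives $e^{-\tilde r_c}\ge\mu>0$ uniformly for $c$ near $c_0$; but $V(b)=e^{-b}-1+b$ is comparable to $b^2$ only while $b$ stays bounded, growing merely linearly for large positive $b$. Thus $\sum_n e^{-\tilde r_c}V(v_1)$ controls $\|v_1\|_{l^2}^2$ only once $\|v_1(t)\|_{l^\infty}$ is controlled. I would close this gap by a continuity (bootstrap) argument: $v(t)=u(t)-u_{c(t)}(\gamma(t))$ is continuous in $l^2$ with $\|v(0)\|_{l^2}=O(\|v_0\|_{l^2})$ small by Lemma \ref{lem:decomp}, so on the closed set where $\|v(t)\|_{l^2}\le1$ one has $V(v_1(n))\ge c_1 v_1(n)^2$ with $c_1=\inf_{|b|\le1}V(b)/b^2>0$, whence $\mu'\|v(t)\|_{l^2}^2\le O(\|v_0\|_{l^2})+O(|c(t)-c_0|)\le C\delta_2$ with $\mu'=\min(\mu c_1,\tfrac12)$. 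Taking $\delta_2$ so small that $C\delta_2/\mu'<\tfrac14$ strictly improves the bound, so this set is also open and therefore equals $[0,T]$; the displayed identity then gives $\|v(t)\|_{l^2}^2\le C(|c(t)-c_0|+\|v_0\|_{l^2})$ for all $t\in[0,T]$, as claimed.
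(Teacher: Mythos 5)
Your proof is correct, and its skeleton is the same as the paper's: conservation of $H$, the observation that \eqref{eq:orth1} annihilates the linear term because $H'(u_{c}(\gamma))=J^{-1}\dot u_{c}(\gamma)$, the bound $H(u_{c(t)})-H(u_{c_0})=O(|c(t)-c_0|)$, and $\delta H=O(\|v_0\|_{l^2})$. Where you genuinely differ is in how coercivity of the remainder is obtained. The paper Taylor-expands $H$ to second order around the modulated soliton, uses $H''(u_c)=\operatorname{diag}(e^{-\tilde r_c},1)\ge 1$ (valid since $\tilde r_c\le 0$ for the Toda profile) to get $\tfrac12\la H''(u_c(\gamma))v,v\ra\ge\tfrac12\|v\|_{l^2}^2$, and relegates the rest to an $O(\|v(t)\|_{l^2}^3)$ error --- a term whose absorption tacitly requires $\|v(t)\|_{l^2}$ to stay small, i.e.\ a continuity argument the paper does not spell out. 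You instead exploit the exact identity $V(a+b)-V(a)-V'(a)b=e^{-a}V(b)$, special to the Toda potential, so the remainder is computed exactly and is manifestly nonnegative; the price is that $V(b)\gtrsim b^2$ only on bounded sets, which you pay for with an explicit open--closed bootstrap in $t$ seeded by $\|v(0)\|_{l^2}=O(\|v_0\|_{l^2})$ from Lemma \ref{lem:decomp}. Your route thus buys two things: it avoids any cubic-remainder estimate in $l^2$, and it makes explicit the smallness/continuity step the paper glosses over. The paper's Taylor route, on the other hand, is less Toda-specific: it is exactly the version that transfers to the FPU setting of Section \ref{sec:4}, where no exact identity is available and one only has convexity of $V_F$ near the wave profile via (P1). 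One cosmetic caveat: your component notation $v=(v_1,v_2)$ collides with the paper's decomposition $v=v_1+v_2$ from \eqref{eq:v1}--\eqref{eq:v2}, which appears in the very hypotheses surrounding this lemma; rename the components (say $v=(\rho,\pi)$) to avoid confusion.
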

\begin{proof}
By \eqref{eq:orth1}, we have
$\la H'(u_{c(t)}(\gamma(t))),v(t)\ra
=\la J^{-1}\dot{u}_{c(t)}(\gamma(t)),v(t)\ra=0.$
Since $H(u(t))$ does not depend on $t$, 
it follows from the convexity of the functional $H$ and the above that
  \begin{align*}
    \delta H:=& H(u_{c_0}(\tau_0)+v_0)-H(u_{c_0})
\\ =& H(u_{c(t)}(\gamma(t))+v(t))-H(u_{c_0})
\\ =& H(u_{c(t)}) +\la H'(u_{c(t)}(\gamma(t))),v(t)\ra
+\frac12\la H''(u_{c(t)}(\gamma(t)))v(t),v(t)\ra
\\ & -H(u_{c_0})+O(\|v(t)\|_{l^2}^3)
\\ & \ge \frac12\|v(t)\|_{l^2}^2-C'|c(t)-c_0|+O(\|v(t)\|_{l^2}^3),
  \end{align*}
where $C'$ is a positive constant.
Noting that $|\delta H|=O(\|v_0\|_{l^2})$, we have \eqref{eq:enorm}
for a $C>0$.
\end{proof}

Because $l^2\subset l^r$ for every $r\in[2,\infty]$,
Lemma \ref{lem:speed-Hamiltonian} allows us to control every $l^r$-norm with
$r\ge2$.
\section{Proof of Theorem \ref{thm:1}}
\label{sec:3}
First, we derive from \eqref{eq:orth1} and \eqref{eq:orth2} a system of
ordinary differential equations which describe the motion of modulating speed
$c(t)$ and phase shift $x(t)=c(t)\gamma(t)$ of the main 
solitary wave.
\begin{lemma}
  \label{lem:modulation}
Let $u(t)$ be a solution to \eqref{eq:Toda2} and  $v_1(t)$ be a solution to
\eqref{eq:v1}.  
Suppose that $c$ and $\gamma$ are $C^1$-functions satisfying
\eqref{eq:orth1} and \eqref{eq:orth2} on $[0,T]$ and
$\inf_{t\in[0,T]}c(t)\linebreak >1$.
Then it holds for $t\in[0,T]$ that
\begin{align*}
& \dot{c}(t)=O(\|v_1(t)\|_{W(t)}^2+\|v_2(t)\|_{X(t)}^2),
\\ &  \dot{x}(t)-c(t)=
O(\|v_1(t)\|_{W(t)}+(\|v(t)\|_{l^2}+\|v_1(t)\|_{l^2})\|v_2(t)\|_{X(t)}),
  \end{align*}
where $\|u\|_{W(t)}
=\left(\sum_{n\in\Z}e^{-\kappa(c(t))|n-x(t)|}|u(n)|^2\right)^{1/2}$,
$\|u\|_{X(t)}=e^{-ax(t)}\|u\|_{l^2_a}$ and $a$ is a constant satisfying
$0<a\le\inf_{t\in[0,T]}\kappa(c(t))$.
\end{lemma}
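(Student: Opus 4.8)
The plan is to obtain a closed $2\times2$ system of ordinary differential equations for $\dot c$ and $\dot x-c$ by differentiating the two orthogonality conditions \eqref{eq:orth1} and \eqref{eq:orth2} in $t$ and substituting the evolution equations \eqref{eq:v} and \eqref{eq:v2}. Write $\phi_1(t)=J^{-1}\dot u_{c(t)}(\gamma(t))$ and $\phi_2(t)=J^{-1}\pd_cu_{c(t)}(\gamma(t))$, so that \eqref{eq:orth1}--\eqref{eq:orth2} read $\la v,\phi_1\ra=0$ and $\la v_2,\phi_2\ra=0$. Differentiating and using \eqref{eq:v}, the contribution of $l_1(t)$ produces the leading coefficients: by skew-symmetry of $J^{-1}$ one has $\la\dot u_c,J^{-1}\dot u_c\ra=\la\pd_cu_c,J^{-1}\pd_cu_c\ra=0$, while $J^{-1}\dot u_c=H'(u_c)$ gives the identity $\la\pd_cu_c,J^{-1}\dot u_c\ra=\frac{d}{dc}H(u_c)$ and $\la\dot u_c,J^{-1}\pd_cu_c\ra=-\frac{d}{dc}H(u_c)$. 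Thus $\la l_1,\phi_1\ra=-\dot c\,\frac{d}{dc}H(u_c)$ and $\la l_1,\phi_2\ra=\frac{\dot x-c}{c}\frac{d}{dc}H(u_c)$, giving a coefficient matrix that is a small perturbation of $\operatorname{diag}\big(-\frac{d}{dc}H(u_c),\frac{d}{dc}H(u_c)\big)$, hence invertible since $\frac{d}{dc}H(u_c)>0$.

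Next I would show the remaining linear-in-$v$ terms are of higher order. Using $J^{-1}$ skew and $H''$ self-adjoint, $\la JH''(u_c)v,\phi_1\ra=-\la v,H''(u_c)\dot u_c\ra$, and differentiating the soliton identity $\pd_\gamma u_c=JH'(u_c)$ yields $H''(u_c)\dot u_c=J^{-1}\pd_\gamma\dot u_c$ and $H''(u_c)\pd_cu_c=J^{-1}\pd_\gamma\pd_cu_c+c^{-1}J^{-1}\dot u_c$. When these are combined with the matching pieces of $\la v,\dot\phi_1\ra$ and $\la v_2,\dot\phi_2\ra$ (with $\dot\phi_j$ computed by the chain rule in $(c,\gamma)$), the coefficient-one parts cancel up to the small factor $\frac{\dot x-c}{c}$, so what survives is $\dot c$ or $\frac{\dot x-c}{c}$ multiplied by a pairing of $v$ against an exponentially localized profile, i.e. by $O(\|v\|_{W(t)})$. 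These are absorbed into the already invertible coefficient matrix or are genuinely quadratic.

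It then remains to estimate the source terms. Since $\phi_1=J^{-1}\dot u_c$ decays like $e^{-2\kappa|n-x|}$ and $N_1$ is quadratic in $v=v_1+v_2$, one gets $\la N_1,\phi_1\ra=O(\|v\|_{W(t)}^2)=O(\|v_1\|_{W(t)}^2+\|v_2\|_{X(t)}^2)$ after checking the weight comparison $\|v_2\|_{W(t)}\lesssim\|v_2\|_{X(t)}$. For the $\dot x$--equation the linear-in-$v_1$ contributions enter in two places: from the extra term $c^{-1}J^{-1}\dot u_c$ above together with $\la v_2,\phi_1\ra=-\la v_1,\phi_1\ra=O(\|v_1\|_{W(t)})$, and from the part of $N_2$ independent of $v_2$. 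Solving the system and substituting the resulting bound for $\frac{\dot x-c}{c}$ back into the $\dot c$--equation removes the cross term and leaves $\dot c=O(\|v_1\|_{W(t)}^2+\|v_2\|_{X(t)}^2)$, as claimed.

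The main obstacle is the estimate of $\la N_2,\phi_2\ra$ in the $\dot x$--equation: because $\phi_2=J^{-1}\pd_cu_c$ does not decay as $n\to+\infty$, this pairing is a priori not even well defined for $v\in l^2$. The decomposition $v=v_1+v_2$ is designed precisely so that the cancellation $N_2=N_1-JH'(v_1)+JH''(u_c)v_1$ removes the part of the nonlinearity depending on $v_1$ alone: a direct computation with $V'(r)=1-e^{-r}$ shows that, at $v_2=0$, the relevant component of $N_2$ equals $(e^{-\tilde r_c}-1)(1-e^{-\rho_1})$ with $\rho_1$ the first component of $v_1$, so that every term of $N_2$ carries a factor of $\tilde r_c$ (the soliton, exponentially localized) or of $v_2$ (which lies in $l^2_a$). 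Hence $N_2$ is exponentially localized in front, $\la N_2,\phi_2\ra$ is well defined, and it is bounded by $O(\|v_1\|_{W(t)}+(\|v\|_{l^2}+\|v_1\|_{l^2})\|v_2\|_{X(t)})$, the $\|v_1\|_{W(t)}$ term coming from the localized coefficient $(e^{-\tilde r_c}-1)$ multiplying the part linear in $v_1$. This localization is the crux of working in the energy space rather than in $l^2_a$.
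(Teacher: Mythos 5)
Your proposal is correct and follows essentially the same route as the paper: differentiating \eqref{eq:orth1} and \eqref{eq:orth2}, using $\ddot u_c=JH''(u_c)\dot u_c$, $\pd_c\dot u_c=JH''(u_c)\pd_c u_c$ and skew-symmetry of $J^{-1}$ to extract the invertible leading matrix $\pm\frac{d}{dc}H(u_c)$ modulo $O(\|v_1\|_{W(t)}+\|v_2\|_{X(t)})$ perturbations, and then estimating the sources exactly as in the paper's splitting $N_2=\widetilde N_1+\widetilde N_2+\widetilde N_3$ --- your explicit Toda computation $(e^{-\tilde r_c}-1)(1-e^{-\rho_1})$ is precisely the content of the paper's term $\widetilde N_3=J(H''(u_c)-1)v_1$ together with the localized quadratic part, and correctly identifies why the pairing with the non-decaying $J^{-1}\pd_c u_c$ is well defined. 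The only (harmless) deviation is notational: you take $\pd_c$ at fixed $x$, producing the extra $c^{-1}J^{-1}\dot u_c$ term that you eliminate via $\la v_2,J^{-1}\dot u_c\ra=-\la v_1,J^{-1}\dot u_c\ra$ from \eqref{eq:orth1}, whereas the paper works at fixed $\gamma$ and has that $\|v_1\|_{W(t)}$ contribution enter only through $N_2$.
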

\begin{proof}
  Differentiating \eqref{eq:orth1} with respect to $t$ and substituting \eqref{eq:v} into the resulting equation, we have
  \begin{align*}
&  \frac{d}{dt}\la v,J^{-1}\dot{u}_{c}(\gamma)\ra\\=& 
\la \dot{v},J^{-1}\dot{u}_{c}(\gamma)\ra
+\frac{\dot{x}}{c}\la v,J^{-1}\ddot{u}_{c}(\gamma)\ra
+\dot{c}\la v,J^{-1}\pd_c\dot{u}_{c}(\gamma)\ra
\\=&  \la JH''(u_{c}(\gamma))v,J^{-1}\dot{u}_{c}(\gamma))\ra+\la v,J^{-1}\ddot{u}_{c}(\gamma)\ra
\\ &+\la l_1+N_1,J^{-1}\dot{u}_{c}(\gamma)\ra
+\left(\frac{\dot{x}}{c}-1\right)\la v,J^{-1}\ddot{u}_{c}(\gamma)\ra
+\dot{c}\la v,J^{-1}\pd_c\dot{u}_{c}(\gamma)\ra
\\ =&0.
\end{align*}
Substituting $\ddot{u}_c=JH''(u_c)\dot{u}_c$ and $J^*=-J$ into the above,
we have
\begin{equation}
  \label{eq:modl1}
\dot{c}\left\{\frac{d}{dc}H(u_c)-\la v,J^{-1}\pd_c\dot{u}_{c}(\gamma)
\ra\right\}-\left(\frac{\dot{x}}{c}-1\right)\la v,J^{-1}\ddot{u}_c(\gamma)\ra  
= \la N_1,J^{-1}\dot{u}_c(\gamma)\ra.  
\end{equation}
\par

Differentiating \eqref{eq:orth2} with respect to $t$, we have
\begin{align*}
&  \frac{d}{dt}\la v_2,J^{-1}\pd_cu_c(\gamma)\ra
\\ =& 
\la \dot{v}_2,J^{-1}\pd_cu_c(\gamma)\ra
+\frac{\dot{x}}{c}\la v_2,J^{-1}\pd_c\dot{u}_c(\gamma)\ra
+\dot{c}\la v_2,J^{-1}\pd_c^2u_c(\gamma)\ra
\\=&  \la JH''(u_{c}(\gamma))v_2,J^{-1}\pd_cu_c(\gamma))\ra
+\la v_2,J^{-1}\pd_c\dot{u}_{c}(\gamma)\ra
\\ &+\la l_1+N_2,J^{-1}\pd_cu_c(\gamma)\ra
+\left(\frac{\dot{x}}{c}-1\right)\la v_2,J^{-1}\pd_c\dot{u}_c(\gamma)\ra
+\dot{c}\la v_2,J^{-1}\pd^2_cu_c(\gamma)\ra
\\ =&0.
\end{align*}
Substituting $\pd_c\dot{u}_c=JH''(u_c)\pd_cu_c$ into the above, we obtain
\begin{equation}
  \label{eq:modl2}
\begin{split}
&\left(\frac{\dot{x}}{c}-1\right)
\left\{\frac{d}{dc}H(u_c)+\la v_2,J^{-1}\pd_c\dot{u}_{c}(\gamma)\ra\right\}
+\dot{c}\la v_2,J^{-1}\pd_c^2u_{c}(\gamma)\ra
\\=& -\la N_2,J^{-1}\pd_cu_{c}(\gamma)\ra.
\end{split}  
\end{equation}
\par
Since $|N_1(t)|\lesssim |v(t)|^2$ and
$|J^{-1}\dot{u}_c(t,n)|\lesssim e^{-2\kappa(c)|n-x(t)|}$ as $n\to\infty$,
we have 
$$\la N_1,J^{-1}\dot{u}_c(\gamma)\ra=O(\|v(t)\|_{W(t)}^2).$$
Let $N_2(t)=\widetilde{N}_1(t)+\widetilde{N}_2(t)+\widetilde{N}_3(t)$, where
\begin{align*}
\widetilde{N}_1(t)=& N_1(t)-JH'(v(t))+Jv(t),
\\ \widetilde{N}_2(t)=& JH'(v(t))-JH'(v_1(t))-Jv_2(t),
\\ \widetilde{N}_3(t)=&
J\left(H''(u_{c(t)}(\gamma(t)))-1\right)v_1(t).  
\end{align*}
We put $G(v):=H'(v)-H'(0)-H''(0)v$ so that $JG(v)$ denotes a part of
$\widetilde{N}_1(t)$ that does not interact with the solitary wave
$u_{c}(\gamma)$. 
Since $|u_c(t,n)|\lesssim e^{-2\kappa(c)|n-x(t)|}$ and
$a\le \inf_{t\in[0,T]}\kappa(c(t))$, we have
$\|u_{c(t)}v^2\|_{X(t)}\lesssim \|v\|_{W(t)}^2.$
Hence by the definition of $\widetilde{N}_1$ and $\widetilde{N}_2$,
\begin{equation}
\label{eq:rhs1}
\|\widetilde{N}_1(t)\|_{X(t)}=\|N_1(t)-JG(v(t))\|_{X(t)}
\lesssim \|v(t)\|_{W(t)}^2,  
\end{equation}
\begin{align}
\label{eq:rhs2}
\|\widetilde{N}_2(t)\|_{X(t)}=&\|JG(v(t))-JG(v_1(t))\|_{X(t)} \\
\lesssim & (\|v(t)\|_{l^\infty}+\|v_1(t)\|_{l^\infty})\|v_2(t)\|_{X(t)}.
\notag
\end{align}
We see from \eqref{eq:q} and \eqref{eq:u} that 
$H''(u_c)-1$ decays like $e^{-2\kappa|n-x(t)|}$ as $n\to\pm\infty$ and for
$a\in(0,\kappa(c(t)))$,
\begin{equation}
  \label{eq:rhs2a}
\|\widetilde{N}_3(t)\|_{X(t)}\lesssim \|v_1(t)\|_{W(t)}.
\end{equation}
Let $\|u\|_{X(t)^*}=e^{ax(t)}\|u\|_{l^2_{-a}}$ and
$\|u\|_{W(t)^*}=
(\sum_{n\in\Z}e^{\kappa(c(t))|n-x(t)|}|u(n)|^2)^{1/2}.
$
In view of \eqref{eq:modl1}, \eqref{eq:modl2} and the fact that 
\begin{gather*}
\sup_{t\in[0,T]}\left(
\|J^{-1}\ddot{u}_{c(t)}(\gamma(t))\|_{W(t)^*}
+\|J^{-1}\pd_c\dot{u}_{c(t)}(\gamma(t))\|_{W(t)^*}\right)<\infty,\\
\sup_{t\in[0,T]}\left(\|J^{-1}\pd_c\dot{u}_{c(t)}(\gamma(t))\|_{X(t)^*}
+\|J^{-1}\pd_c^2u_{c(t)}(\gamma(t))\|_{X(t)^*}\right)<\infty,  
\end{gather*}
we have
\begin{equation*}
  \mathcal{A}(t)
  \begin{pmatrix}
    \dot{c}(t) \\ \dot{x}(t)-c(t)
  \end{pmatrix}=
  \begin{pmatrix}
O(\|v(t)\|_{W(t)}^2) \\ 
O(\|v_1(t)\|_{W(t)}+
(\|v(t)\|_{l^2}+\|v_1(t)\|_{l^2})\|v_2(t)\|_{X(t)})
  \end{pmatrix},
\end{equation*}
where $\mathcal{A}(t)=\operatorname{diag}(dH(u_c)/dc,dH(u_c)/dc)
+O(\|v_1(t)\|_{W(t)}+\|v_2(t)\|_{X(t)})$.
We have thus proved Lemma \ref{lem:modulation}.
\end{proof}

Since $v_1(t)$ is smaller than the main wave, it moves more slowly and will be separated from the main wave. The following is an analog of
\textit{virial lemma} for small solutions in Martel and Merle \cite{MM2}.
\begin{lemma}
  \label{lem:monotonicity}
Let $v_1(t)$ be a solution to \eqref{eq:v1}. 
\begin{itemize}
\item [(i)] Suppose $v_0\in l^2$. Then
$\sup_{t\in\R}\|v_1(t)\|\le C\|v_0\|_{l^2},$ where $C$ can be chosen as an
increasing function of $\|v_0\|_{l^2}$.
\item[(ii)]
Let $c_1>1$ and $\tilde{x}(t)$ be a $C^1$-function satisfying
$\inf_{t\in\R}\tilde{x}_t\ge c_1$.
Then there exist positive numbers $a_0$ and $\delta_3$ such that 
if $a\in(0,a_0)$ and $\|v_0\|_{l^2}\le\delta_3$,
$$\|\psi_a(t)^{1/2}v_1(t)\|_{l^2}^2+
\int_0^t\|\tilde{\psi}_a(t)v_1(s)\|_{l^2}^2ds \lesssim
\|\psi_a(0)^{1/2}v_0\|_{l^2}^2,$$
where $\psi_a(t,x)=1+\tanh a(x-\tilde{x}(t))$ and
$\tilde{\psi}_a(t,x)=a^{1/2}\sech a(x-\tilde{x}(t))$. 
\end{itemize}
\end{lemma}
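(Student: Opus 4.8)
The plan is to handle the two parts with different tools: conservation of the Hamiltonian $H$ together with its coercivity for (i), and a localized energy (virial-type) identity for (ii). Throughout I write $v_1(t)={}^t\!(r_1(t),p_1(t))$, so that by the form of $J$ and $H'$ the equation \eqref{eq:v1} reads $\dot r_1(n)=p_1(n+1)-p_1(n)$ and $\dot p_1(n)=V'(r_1(n))-V'(r_1(n-1))$, and I introduce the nonnegative energy density $E(n)=\frac12 p_1(n)^2+V(r_1(n))$, whose total $\sum_n E(n,t)=H(v_1(t))$ is conserved.

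For (i), global existence in $l^2$ is furnished by \cite{FP2}, so only the uniform bound is at issue. Since $V(r)=e^{-r}-1+r\ge\frac12 r^2$ for $r\le0$, $V(r)\ge c_R r^2$ for $0\le r\le R$, and $V$ is increasing with $V(r)\to\infty$ as $r\to+\infty$, one bounds $\|v_1(t)\|_{l^2}^2=\sum_n(r_1(n)^2+p_1(n)^2)$ by an increasing function of $H(v_0)$: the $p$-part is at most $2H(v_0)$, the sites with $r_1(n)\le R$ contribute at most $C_R H(v_0)$, while at most $H(v_0)/V(R)$ sites can have $r_1(n)>R$, each with $r_1(n)$ bounded through $V(r_1(n))\le H(v_0)$. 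Finally $H(v_0)$ is itself controlled by an increasing function of $\|v_0\|_{l^2}$, using $|r_0(n)|\le\|v_0\|_{l^2}$ and the Taylor bound $V(r)\le\frac12 e^{|r|}r^2$. In particular, when $\|v_0\|_{l^2}\le\delta_3$ is small, $\sup_t\|v_1(t)\|_{l^2}$, hence $\sup_t\|v_1(t)\|_{l^\infty}$, is small; this smallness is what makes the linearization errors in (ii) admissible.

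For (ii), set the localized energy $I(t)=\sum_n\psi_a(t,n)E(n,t)$. A direct computation gives the discrete conservation law $\dot E(n)=F(n+1)-F(n)$ with flux $F(n)=V'(r_1(n-1))p_1(n)$. Differentiating $I$ and summing by parts,
\begin{equation*}
\dot I(t)=-a\dot{\tilde x}(t)\sum_n\sech^2 a(n-\tilde x)\,E(n)-\sum_n\bigl(\psi_a(n)-\psi_a(n-1)\bigr)F(n).
\end{equation*}
The first term is the dissipative one: since $\dot{\tilde x}\ge c_1$ it is bounded above by $-ac_1\sum_n\sech^2 a(n-\tilde x)E(n)$. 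For the flux term I write $\psi_a(n)-\psi_a(n-1)=a\sech^2 a(\eta_n-\tilde x)$ with $\eta_n\in(n-1,n)$, and use the smallness from (i): then $|F(n)|\le\frac{1+\epsilon}2(r_1(n-1)^2+p_1(n)^2)$ and $E(n)\ge\frac{1-\epsilon}2(r_1(n)^2+p_1(n)^2)$ with $\epsilon=\epsilon(\delta_3)\to0$. Reindexing the $r_1(n-1)^2$ contribution and replacing $\eta_n$ by $n$ at the cost of a factor $1+O(a)$ (the weight varies on scale $1/a$), the flux term is dominated in modulus by $\frac{(1+\epsilon)(1+O(a))}{1-\epsilon}\,a\sum_n\sech^2 a(n-\tilde x)E(n)$.

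The crux is that the Toda sound speed equals $1$, strictly below $c_1$. Choosing $a_0$ and $\delta_3$ small enough that $\frac{(1+\epsilon)(1+O(a))}{1-\epsilon}<c_1$, the dissipative term wins and
\begin{equation*}
\dot I(t)\le-\frac{a(c_1-1)}2\sum_n\sech^2 a(n-\tilde x)\,E(n)\le0.
\end{equation*}
Integrating on $[0,t]$ and using the small-data equivalences (from $\frac{1-\epsilon}2(r_1^2+p_1^2)\le E\le\frac{1+\epsilon}2(r_1^2+p_1^2)$) that $I(t)$ is comparable to $\frac12\|\psi_a^{1/2}v_1(t)\|_{l^2}^2$ and that $a\sum_n\sech^2 a(n-\tilde x)E(n)$ is comparable to $\frac12\|\tilde\psi_a v_1\|_{l^2}^2$ (since $\tilde\psi_a^2=a\sech^2 a(\cdot-\tilde x)$) yields the asserted inequality. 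I expect the main obstacle to be precisely this sharp bookkeeping of the flux: one must see that its constant is governed by the sound speed $1$ and keep both the discreteness error $O(a)$ and the nonlinear error $\epsilon$ strictly below the margin $c_1-1$, which is exactly what forces $a$ and $\|v_0\|_{l^2}$ to be small.
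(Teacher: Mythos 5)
Your proposal is correct and is essentially the paper's own argument: part (i) by conservation of $H$ together with the coercivity of $V$ (the paper's $\inf_{|x|\le R}|x|^{-2}V(x)>0$ step, which you merely spell out in more detail for large $r$), and part (ii) by differentiating the localized energy $\sum_n\psi_a(t,n)\bigl(\tfrac12 p_1^2+V(r_1)\bigr)$, using the flux identity, the smallness $|V'(r)-r|\lesssim\|v_0\|_{l^2}|r|$ inherited from (i), the $O(a)$ comparison between the weight increment and $\tilde\psi_a^2$, and the margin $c_1-1>0$ over the unit sound speed. The bookkeeping you flag as the crux is exactly the paper's inequality $\sup_{n,t}\bigl|\frac{\psi_a(t,n)-\psi_a(t,n-1)}{\tilde\psi_a(t,n)^2}-1\bigr|=O(a)$, so no gap remains.
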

\begin{corollary}
  \label{cor:monotonicity}
Let $v_1(t)$ be a solution to \eqref{eq:v1}. 
For every $c_1>1$, there exists $\delta_3>0$ such that
$\lim_{t\to\infty}\|v_1(t)\|_{l^2(n\ge c_1t)}=0$
if $\|v_0\|_{l^2}<\delta_3$.
\end{corollary}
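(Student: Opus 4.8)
The plan is to deduce the corollary from the pointwise-in-time half of the virial bound in Lemma~\ref{lem:monotonicity}(ii), applied to a weight whose transition front is placed far ahead of the bulk of the initial data and is then transported to the right at a speed strictly between $1$ and $c_1$. Only the first term of that estimate is needed; the flux integral plays no role here. Write $c_*$ for the speed $c_1$ appearing in the statement, fix an intermediate speed $c_2\in(1,c_*)$, and for a shift parameter $y\ge0$ set $\tilde{x}_y(t)=c_2t+y$, so that $\dot{\tilde{x}}_y\equiv c_2>1$ and Lemma~\ref{lem:monotonicity}(ii) applies with its speed parameter taken to be $c_2$.

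First I would record that for $a\in(0,a_0)$ and $\|v_0\|_{l^2}\le\delta_3$ the lemma yields, with $\psi_a^y(t,n)=1+\tanh a(n-c_2t-y)$,
\[
\|(\psi_a^y(t))^{1/2}v_1(t)\|_{l^2}^2\lesssim\|(\psi_a^y(0))^{1/2}v_0\|_{l^2}^2
\qquad\text{for all }t\ge0,
\]
where the implicit constant may be taken independent of $y$, because \eqref{eq:v1} does not see $\tilde{x}_y$ and the weight $\psi_a^y$ together with its discrete derivatives depends on $y$ only through a translation. Next, since $\psi_a^y(0,n)=1+\tanh a(n-y)\to0$ as $y\to\infty$ for each fixed $n$ and $0\le\psi_a^y(0,n)\le2$, dominated convergence gives $\|(\psi_a^y(0))^{1/2}v_0\|_{l^2}^2\to0$ as $y\to\infty$. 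Hence, given $\varepsilon>0$, I fix $y$ so large that the right-hand side above is below $\varepsilon$, and conclude $\sup_{t\ge0}\|(\psi_a^y(t))^{1/2}v_1(t)\|_{l^2}^2<\varepsilon$.

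Finally comes the geometric comparison. For $n\ge c_*t$ one has $n-c_2t-y\ge(c_*-c_2)t-y$, so once $t\ge y/(c_*-c_2)$ the argument of the hyperbolic tangent is nonnegative and $\psi_a^y(t,n)\ge1$. Therefore, for all such $t$,
\[
\|v_1(t)\|_{l^2(n\ge c_*t)}^2
\le\sum_{n\ge c_*t}\psi_a^y(t,n)\,|v_1(t,n)|^2
\le\|(\psi_a^y(t))^{1/2}v_1(t)\|_{l^2}^2<\varepsilon,
\]
which proves $\lim_{t\to\infty}\|v_1(t)\|_{l^2(n\ge c_*t)}=0$. The one point requiring care --- and the only place the argument could go wrong --- is the uniformity in $y$ of the constant in Lemma~\ref{lem:monotonicity}(ii); this is precisely why I chose the transported weights to be exact translates of one another, so that the quantities produced by the monotonicity computation (which involve only $\dot{\psi}_a^y$ and spatial differences of $\psi_a^y$) are manifestly $y$-independent, and the constants $a_0$, $\delta_3$ depend only on $c_2$, hence only on $c_*$. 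All remaining steps are routine, the substantive work having been carried out in the lemma.
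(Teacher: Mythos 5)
Your proof is correct and is essentially the paper's own argument in dual form: the paper fixes the weight front at $c_2t$ and shifts the solution by $n_0(t)=[(c_1-c_2)t]$, using translation invariance of \eqref{eq:v1} and dominated convergence on the translated data $v_0(\cdot+n_0(t))$, whereas you keep the solution fixed and translate the weight by a large fixed $y$, then wait until the $c_1t$-front overtakes $c_2t+y$ --- the same two-speed comparison via Lemma \ref{lem:monotonicity}(ii). The uniformity in the shift that you flag is indeed the one point needing care, and it holds for exactly the reason you give (the constants in the lemma depend only on $\inf\dot{\tilde{x}}\ge c_2$, $a$ and the bound on $\|v_0\|_{l^2}$, not on $\tilde{x}(0)$).
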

\begin{proof}[Proof of Lemma \ref{lem:monotonicity}]
Since $v_1(t)\in C^2(\R;l^2)$ is a solution to \eqref{eq:v1},
we have $H(v_1(t))=H(v_0)$ for $t\in\R$.
Noting that $V(x)$ is coercive and $\inf_{|x|\le R}|x|^{-2}V(x)\linebreak >0$ for
 every $R>0$, we have 
$$\delta' \|v(t)\|_{l^2}^2\le H(v(t))
=H(v_0)\le C(\|v_0\|_{l^2})\|v_0\|_{l^2}^2,$$
where $C$ can be chosen as an increasing function of $\|v_0\|_{l^2}$
and $\delta'$ is a positive constant depending only on $\|v_0\|_{l^2}$.
\par

Next, we prove (ii).
Put 
$$v_1(t)={}^t\!(r_1(t,n),p_1(t,n)),\quad
h_1(t,n)=\frac12p_1(t,n)^2+V(r_1(t,n)).$$
By \eqref{eq:Toda2} and the fact that there exists a $C>0$ such that
for every $n\in\Z$,
\begin{align*}
&\left|V(r_1(t,n))-\frac{r_1(t,n)^2}{2}\right|
\le   C\|v_0\|_{l^2}|r_1(t,n)|^2,
\\ & \left|V'(r_1(t,n))-r_1(t,n)\right| \le C\|v_0\|_{l^2}|r_1(t,n)|,
\end{align*}
we have
\begin{align*}
&  \frac{d}{dt}\sum_{n\in\Z}\psi_a(t,n)h_1(t,n)
\\ =&
\sum_{n\in\Z}p_1(t,n)V'(r_1(t,n-1))\left(\psi_a(t,n-1)-\psi_a(t,n)\right)\
 +\sum_{n\in\Z}\pd_t\psi_a(t,n)h_1(t,n)
\\ \le &
-\frac{\tilde{x}_t(t)}2\sum_{n\in\Z}\tilde{\psi}_a(t,n)^2p_1(t,n)^2
\\ & +(1+C'\|v_0\|_{l^2})
\sum_{n\in\Z}\left|\psi_a(t,n-1)-\psi_a(t,n)\right||p_1(t,n)r_1(t,n-1)|
\\ &-\frac{\tilde{x}_t(t)}2(1-C'\|v_0\|_{l^2})
\sum_{n\in\Z}\tilde{\psi}_a(t,n-1)^2r_1(t,n-1)^2,
\end{align*}
where $C'$ is a positive constant.
Let $\delta_3$ and $a$ be sufficiently small numbers.
Since $\inf \tilde{x}_t\ge c_1>1$ and
$$\sup_{n,t}\left|\frac{\psi_a(t,n)-\psi_a(t,n-1)}{\tilde{\psi}_a(t,n)^2}-1\right|
=O(a) \quad\text{as $a\downarrow 0$},$$
there exists a $\tilde{\delta}>0$ such that for $t\in[0,T]$,
\begin{equation}
\label{eq:mono1}
 \frac{d}{dt}\sum_{n\in\Z}\psi_a(t,n)h_1(t,n)
 \le  -\tilde{\delta}
\sum_{n\in\Z}\tilde{\psi}_a(t,n)^2(p_1(t,n)^2+r_1(t,n)^2).
\end{equation}
Integrating \eqref{eq:mono1} over $[0,t]$, we have
\begin{align*}
& \sum_{n\in\Z}\psi_a(t,n)h_1(t,n)+ 
\tilde{\delta}\sum_{n\in\Z}\int_0^t \tilde{\psi}_a(s,n)^2(p_1(s,n)^2+r_1(s,n)^2)ds
\\ \lesssim & \sum_{n\in\Z}\psi_a(0,n)h_1(0,n)
\lesssim  \|v_0\|_{l^2}^2.
\end{align*}
We have thus proved Lemma \ref{lem:monotonicity}.
\end{proof}
\begin{proof}[Proof of Corollary \ref{cor:monotonicity}]
  Let $c_2\in(1,c_1)$ and let $\tilde{x}(t)=c_2t$.
Then by Lemma \ref{lem:monotonicity}, we have
$
\|v_1(t)\|_{l^2(n\ge c_2t)}\lesssim \|\psi_a(0)^{1/2}v_0\|_{l^2}.$
Let $n_0(t)=[(c_1-c_2)t]$, a largest integer which is smaller than
$(c_1-c_2)t$. Then we have $n_0(t)\to\infty $ as $t\to\infty$ and
\begin{align*}
\|v_1(t)\|_{l^2(n\ge c_1t)}\le &
 \|v_1(t,\cdot+n_0(t))\|_{l^2(n\ge c_2t)}
\\ \lesssim &
\|\psi_a(0,\cdot)^{1/2}v_0(\cdot+n_0(t))\|_{l^2}.
\end{align*}
Letting $t\to\infty$, we have
$\lim_{t\to\infty}\|v_1(t)\|_{l^2(n\ge c_1t)}=0.$
This completes the proof of Corollary \ref{cor:monotonicity}.
\end{proof}

Next, we will show the decay estimate of $v_2$.
\begin{lemma}
\label{lem:v2}
Let $c_0>1$, $a\in(0,\kappa(c_0)/3)$ and $\delta_4$ be a sufficiently small
positive number.
Suppose that the decomposition \eqref{eq:decomp}, \eqref{eq:orth1} and 
\eqref{eq:orth2} exists for $t\in[0,T]$ and that
$\|v_0\|_{l^2}+\sup_{t\in[0,T]}\left(|c(t)-c_0|+|\dot{x}(t)-c_0|\right)
\le\delta_4$,
where $x(t)=c(t)\gamma(t)$. Then
\begin{equation}
  \label{eq:v-w1}
\|Q_{c(t)}(\gamma(t))v_2(t)\|_{X(t)} \le  C
\left( e^{-bt/4}\|v_0\|_{l^2}+\int_0^te^{-b(t-s)/4}\|v_1(s)\|_{W(s)}ds\right),
\end{equation}  for $t\in[0,T]$, and
\begin{equation}
  \label{eq:v-w2}
\int_0^T\|v_2(t)\|_{X(t)}^2dt\le C\|v_0\|_{l^2}^2,
\end{equation}
where $C$ is a positive constant independent of $T$ and
$\|v\|_{X(t)}$ and $\|v\|_{W(t)}$ are as in Lemma \ref{lem:modulation}.
\end{lemma}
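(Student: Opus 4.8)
The plan is to estimate the projected quantity $w(t):=Q_{c(t)}(\gamma(t))v_2(t)$ by a Duhamel formula built on the exponentially decaying propagator of Corollary \ref{cor:1}, and then recover $\|v_2\|_{X(t)}$ by controlling the neutral part $P_{c(t)}(\gamma(t))v_2$ separately. The neutral part is cheap: since $\dot u_c=JH'(u_c)$ gives $\la \pd_cu_c,J^{-1}\dot u_c\ra=dH(u_c)/dc=\theta(c)^{-1}$ and $J^{-1}$ is anti-self-adjoint, one has $P_c\dot u_c=\dot u_c$, $P_c\pd_cu_c=\pd_cu_c$, hence $Q_cl_1=0$ exactly. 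Using \eqref{eq:orth2} to kill the $\dot u_c$ component and \eqref{eq:orth1} to replace the remaining pairing, one finds $P_{c}(\gamma)v_2=-\theta(c)\la v_1,J^{-1}\dot u_c(\gamma)\ra\,\pd_cu_c(\gamma)$. Because $J^{-1}\dot u_c(\gamma)$ decays like $e^{-2\kappa|n-x|}$, this yields $\|P_c(\gamma)v_2\|_{X(t)}\lesssim\|v_1\|_{W(t)}$ and therefore $\|v_2\|_{X(t)}\lesssim\|w\|_{X(t)}+\|v_1\|_{W(t)}$, so it suffices to bound $w$.

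To bound $w$, I would compare the genuinely $c(t)$-dependent flow $\dot v_2=JH''(u_{c(t)}(\gamma(t)))v_2+l_1+N_2$ with the frozen-speed propagator $U(t,\tau)$ of Corollary \ref{cor:1}, using as comparison modulation $\gamma_*(t):=x(t)/c_0$ \emph{rather than} $\gamma=x/c$. This is the crucial choice: the comparison wave $\tilde u_{c_0}(\cdot-c_0\gamma_*)=\tilde u_{c_0}(\cdot-x(t))$ then sits at exactly the true position $x(t)$, so the difference of generators $JH''(\tilde u_{c(t)}(\cdot-x))-\dot\gamma_*JH''(\tilde u_{c_0}(\cdot-x))$ splits into a localized $O(|c-c_0|)$ piece and a term with coefficient $(1-\dot\gamma_*)=(c_0-\dot x)/c_0=O(|\dot x-c_0|)$; both are small by hypothesis, and $\dot\gamma_*=\dot x/c_0\ge1/2$ holds automatically, removing any need to control the growing quantity $\gamma\dot c$. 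Moreover $c_0\gamma_*=x$ aligns the weight of Corollary \ref{cor:1} with $\|v\|_{X(t)}=e^{-ax}\|v\|_{l^2_a}$. Writing $w=Q_{c_0}(\gamma_*)w+(Q_{c(t)}(\gamma)-Q_{c_0}(\gamma_*))w$ with $\|Q_{c(t)}-Q_{c_0}\|=O(|c-c_0|)$, Duhamel's formula for $U(t,\tau)$ gives $\|w(t)\|_{X(t)}\lesssim e^{-bt/2}\|v_0\|_{l^2}+\int_0^te^{-b(t-s)/2}\|f(s)\|_{X(s)}\,ds$, where the forcing $f$ collects $Q_{c_0}N_2$, the generator mismatch applied to $w$, the commutators $[Q,JH'']v_2$ and $\dot Qv_2$ (carrying factors $\dot c$, $\dot x-c$ controlled by Lemma \ref{lem:modulation}), and the $O(|c-c_0|)$ remainder of $Q_cl_1=0$. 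By \eqref{eq:rhs1}--\eqref{eq:rhs2a} and $\|v_2\|_X\lesssim\|w\|_X+\|v_1\|_W$ this reduces to $\|f(s)\|_{X(s)}\lesssim\|v_1(s)\|_{W(s)}+\delta_4\|w(s)\|_{X(s)}+(\text{higher order})$. A Gronwall argument for convolution inequalities absorbs the $\delta_4\|w\|_X$ term for $\delta_4$ small, degrading the exponent from $b/2$ to $b/4$ and producing \eqref{eq:v-w1}; the initial contribution is bounded by $\|v_2(0)\|_{X(0)}=\|u_{c_0}(\tau_0)-u_{c(0)}(\gamma(0))\|_{X(0)}\lesssim\|v_0\|_{l^2}$ via Lemma \ref{lem:decomp}.

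For \eqref{eq:v-w2} I would use $\|v_2\|_{X(t)}\lesssim\|w\|_{X(t)}+\|v_1\|_{W(t)}$, square and integrate, and bound the convolution by Young's inequality, $\int_0^T\big(\int_0^te^{-b(t-s)/4}\|v_1(s)\|_{W(s)}\,ds\big)^2dt\lesssim\int_0^T\|v_1\|_{W}^2\,dt$. The remaining integral $\int_0^T\|v_1(t)\|_{W(t)}^2\,dt\lesssim\|v_0\|_{l^2}^2$ follows from the virial Lemma \ref{lem:monotonicity}(ii) applied with $\tilde x=x(t)$, which is admissible since $\dot x\ge c_1>1$ for $\delta_4$ small, and the $W$-weight $e^{-\kappa|n-x|}$ is dominated by $a^{-1}\tilde\psi_a(t,\cdot)^2$.

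The main obstacle is the middle step: reducing the genuinely $c(t)$-dependent linearized flow to the fixed-speed estimate of Corollary \ref{cor:1} while the wave drifts, and closing the \emph{a priori} circular inequality in which $\|v_2\|_{X}$ reappears in the forcing through $\widetilde N_2$ and the projection commutators. The position-matched comparison $\gamma_*=x/c_0$ and the exact identity $Q_cl_1=0$ are precisely what keep every error either exponentially localized with an $O(|c-c_0|)$ prefactor or of higher order, so that the smallness of $\delta_4$ is enough to close the bootstrap.
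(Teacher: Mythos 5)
Your proposal is correct and follows essentially the same route as the paper's proof: your position-matched comparison $\gamma_*(t)=x(t)/c_0$ is exactly the paper's $\tilde{\gamma}(t)$, your identity $P_{c}(\gamma)v_2=-\theta(c)\la v_1,J^{-1}\dot{u}_c(\gamma)\ra\,\pd_cu_c(\gamma)$ reproduces \eqref{eq:deftv2} and \eqref{eq:v2-1}--\eqref{eq:v2-3}, and your Duhamel--Gronwall bootstrap via Corollary \ref{cor:1} (with the rate degraded from $b/2$ to $b/4$), followed by Young's inequality against the virial bound of Lemma \ref{lem:monotonicity} (ii), is precisely how the paper obtains \eqref{eq:v-w1} and \eqref{eq:v-w2}. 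The only difference is bookkeeping: you exploit $Q_{c(t)}(\gamma(t))l_1=0$ and projection commutators, whereas the paper keeps $l_1$ (estimated through Lemma \ref{lem:modulation}) and writes the resulting correction terms $l_2,l_3,l_4$ explicitly.
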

\begin{proof}
Let $\tilde{v}_2(t):=Q_{c(t)}(\gamma(t))v_2(t)$ and
$w(t)=Q_{c_0}(\tilde{\gamma}(t))\tilde{v}_2(t)$,
where $\tilde{\gamma}(t)=x(t)/c_0$. Here we choose $\tilde{\gamma}(t)$ so that
$u_{c(t)}(\gamma(t))$ and $u_{c_0}(\tilde{\gamma}(t))$ have the same phase
shift and for $0<a<\min(\kappa(c(t)),\kappa(c_0))$,
$$\|Q_{c(t)}(\gamma(t))-Q_{c_0}(\tilde{\gamma}(t))\|_{B(l^2_a)}=O(|c(t)-c_0|).
$$
By \eqref{eq:orth1} and \eqref{eq:orth2}, 
\begin{equation}
  \label{eq:deftv2}
\begin{split}
\tilde{v}_2(t)=&
v_2(t)-\theta(c(t))\la v_2(t),J^{-1}\dot{u}_c(\gamma(t))\ra\pd_cu_c(\gamma(t))
\\=& v_2(t)+\theta(c(t))\la v_1(t),J^{-1}\dot{u}_c(\gamma(t))\ra
\pd_cu_c(\gamma(t)).
\end{split}   
\end{equation}
Thus we have
$$
\frac{d\tilde{v}_2}{dt}=JH''(u_{c(t)})(\gamma(t))\tilde{v}_2
+l_1(t)+l_2(t)+l_3(t)+N_2(t),
$$
where
\begin{align*}
l_2(t)=& \frac{d}{dt}\left\{\theta(c(t))
\la v_1(t),J^{-1}\dot{u}_{c(t)}(\gamma(t))\ra\pd_cu_{c(t)}(\gamma(t))\right\},
\\
l_3(t)=& -\theta(c(t))\la v_1(t),J^{-1}\dot{u}_{c(t)}(\gamma(t))\ra
\pd_c\dot{u}_{c(t)}(\gamma(t)).  
\end{align*}
Since
$$\left[\frac{d}{dt}-\dot{\tilde{\gamma}}JH''(u_{c_0}(\tilde{\gamma}))
,Q_{c_0}(\tilde{\gamma})\right]=0,$$
we have
\begin{equation}
  \label{eq:w}
  \begin{split}
 \dot{w}-\dot{\tilde{\gamma}}JH''(u_{c_0}(\tilde{\gamma}))w
=&
Q_{c_0}(\tilde{\gamma})
\left\{\dot{\tilde{v}}_2-
\dot{\tilde{\gamma}}JH''(u_{c_0}(\tilde{\gamma}))\tilde{v}_2\right\}
\\=& 
Q_{c_0}(\tilde{\gamma})
\left\{\sum_{1\le k\le 4}l_k+\sum_{1\le k\le 3}\widetilde{N}_k\right\},
  \end{split}
\end{equation}
where
\begin{align*}
l_4(t)=& J\left\{H''(u_{c(t)}(\gamma(t)))
-H''(u_{c_0}(\tilde{\gamma}(t)))\right\}\tilde{v}_2(t)
\\ & -(\dot{\tilde{\gamma}}(t)-1)
JH''(u_{c_0}(\tilde{\gamma}(t)))\tilde{v}_2(t).
\end{align*}
In view of  Lemma \ref{lem:modulation}, we have for $a\in(0,2\kappa(c(t)))$,
\begin{equation}
  \label{eq:rhs2b}
\|l_1\|_{X(t)} \lesssim  \|v_1(t)\|_{W(t)}+
(\|v(t)\|_{l^2}+\|v_1(t)\|_{l^2}+\|v_2(t)\|_{X(t)})\|v_2(t)\|_{X(t)}.
\end{equation}
By \eqref{eq:v1} and the fact that $J^{-1}\dot{u}_c(\gamma)$,
$\frac{d}{dt}J^{-1}\dot{u}_c(\gamma)$, $\pd_cu_c(\gamma)$ and
$\frac{d}{dt}\pd_cu_c(\gamma)$ decay like
$e^{-2\kappa|n-x(t)|}$ as $n\to\pm\infty$,
we have
\begin{equation}
  \label{eq:rhs2c}
\|l_2(t)\|_{X(t)}\lesssim  \|v_1(t)\|_{W(t)}.
\end{equation}
Similarly, we have
\begin{equation}
  \label{eq:rhs3}
 \|l_3(t)\|_{X(t)}\lesssim \|v_1(t)\|_{W(t)}.
\end{equation}
Since $x(t)=c_0\tilde{\gamma}(t)=c(t)\gamma(t)$,
\begin{equation}
  \label{eq:rhs4}
  \begin{split}
\|l_4(t)\|_{X(t)}\lesssim &(|c(t)-c_0|+|\dot{x}(t)-c_0|)
\|\tilde{v}_2(t)\|_{X(t)}
\\ \lesssim & \delta_4 (\|v_1(t)\|_{W(t)}+\|v_2(t)\|_{X(t)}).      
  \end{split}
\end{equation}
Let $U(t,s)$ be a flow generated by 
$$
\frac{dw}{dt}=\dot{\tilde{\gamma}}(t)JH''(u_{c_0}(\tilde{\gamma}(t)))w.$$
Applying Corollary \ref{cor:1} to \eqref{eq:w} and substituting
\eqref{eq:rhs1}--\eqref{eq:rhs2a} and \eqref{eq:rhs2b}--\eqref{eq:rhs4},
 we have
\begin{align*}
& \|w(t)\|_{X(t)}
\\ \lesssim & \|U(t,0)w(0)\|_{X(t)}
+\sum_{k=1}^4\int_0^t\|U(t,s)Q_{c_0}(\tilde{\gamma}(s))l_k(s)\|_{X(t)}
\\ & +
\sum_{k=1}^3\int_0^t\|U(t,s)Q_{c_0}(\tilde{\gamma}(s))
\widetilde{N}_k(s)\|_{X(t)}
\\ \lesssim & e^{-bt/2}\|w(0)\|_{X(0)}
+\int_0^t e^{-b(t-s)/2} \|v_2(s)\|_{X(s)}^2ds
\\ +& \int_0^t e^{-b(t-s)/2}\left\{ \|v_1(s)\|_{W(s)}+
(\delta_4+\|v_1(s)\|_{l^2}+\|v_2(s)\|_{l^2})\|v_2(s)\|_{X(s)}\right\}.
\end{align*}
Here we use $\|u\|_{W(t)}\lesssim \|u\|_{l^2}$ and 
$\|u\|_{W(t)}\lesssim \|u\|_{X(t)}$ for $a\in(0,\kappa(c(t))/2)$.
By the definition of $\tilde{v}_2$ and $w$,
\begin{equation}
\label{eq:v2-1}
  \|v_2(t)\|_{X(t)}\lesssim \|\tilde{v}_2(t)\|_{X(t)}+
\|v_1(t)\|_{W(t)},
\end{equation}
\begin{align}
\label{eq:v2-2}
  \|\tilde{v}_2(t)\|_{X(t)}\le & \|w(t)\|_{X(t)}
+\|(Q_{c(t)}(\gamma(t))-Q_{c_0}(\tilde{\gamma}(t)))\tilde{v}_2(t)\|_{X(t)}
\\ \lesssim & \notag
\|w(t)\|_{X(t)}+|c(t)-c_0|\|\tilde{v}_2(t)\|_{X(t)}.
\end{align}
If $\delta_4$ is sufficiently small, Eqs. \eqref{eq:v2-1} and \eqref{eq:v2-2}
imply
$\|\tilde{v}_2(t)\|_{X(t)}\lesssim \|w(t)\|_{X(t)}$ and
\begin{equation}
  \label{eq:v2-3}
\|v_2(t)\|_{X(t)}\lesssim \|w(t)\|_{X(t)}+\|v_1(t)\|_{W(t)}.  
\end{equation}
It follows from Lemmas \ref{lem:monotonicity} (i) and
 \ref{lem:speed-Hamiltonian} that
$\|v_1(t)\|_{l^2}+\|v_2(t)\|_{l^2}^2\lesssim \|v_0\|_{l^2}
+|c(t)-c_0|.$
Thus as long as $\sup_{0\le s\le t}\|w(s)\|_{X(s)}\le \sqrt{\delta_4}$,
we have
\begin{align*}
\|w(t)\|_{X(t)}\lesssim & e^{-bt/2}\|w(0)\|_{X(0)}
\\ &+\int_0^t e^{-b(t-s)/2}\left(\|v_1(s)\|_{W(s)}+
\sqrt{\delta_4}\|w(s)\|_{X(s)}\right)ds.  
\end{align*}
Applying Gronwall's inequality, we have
\begin{align}
\label{eq:gr}
\|w(t)\|_{X(t)}\lesssim & e^{-(b/2+O(\sqrt{\delta_4}))t}\|w(0)\|_{X(0)}
\\ & +\int_0^t e^{-(b/2+O(\sqrt{\delta_4}))(t-s)}\|v_1(s)\|_{W(s)}ds.
\notag
\end{align}
By the definition of $w$, \eqref{eq:v2}, \eqref{eq:deftv2} and
Lemma \ref{lem:decomp},
\begin{equation}
  \label{eq:gr2}
\|w(0)\|_{X(0)} \lesssim
\|v_2(0)\|_{X(0)}+\|v_1(0)\|_{l^2}\lesssim  \|v_0\|_{l^2}.  
\end{equation}
In view of Lemma \ref{lem:monotonicity} (i),  \eqref{eq:gr} and \eqref{eq:gr2},
we have $\|w(t)\|_{X(t)}\lesssim \|v_0\|_{l^2}=O(\delta_4)$ and \eqref{eq:gr}
persists for $t\in[0,T]$ if $\delta_4$ is sufficiently small. Thus by 
\eqref{eq:v2-2}, we have \eqref{eq:v-w1}
Combining \eqref{eq:v-w1}, \eqref{eq:deftv2}, and
Lemma \ref{lem:monotonicity} (ii) and using Young's inequality, we have
\begin{align*}
\|v_2(t)\|_{L^2(0,T;X(t))} \lesssim &  \|v_0\|_{l^2}
+\|e^{-bt/4}\|_{L^1(0,T)} \|v_1\|_{L^2(0,T;W(t))}
\\ \lesssim & \|v_0\|_{l^2}.
\end{align*}
We have thus completed the proof of Lemma \ref{lem:v2}.
\end{proof}

Now, we are in position to prove the following proposition.
\begin{proposition}
  \label{prop:2}
Let $c_0>1$, $\tau_0\in\R$ and let $u(t)$ be a solution to \eqref{eq:Toda2}
with $u(0)=u_{c_0}(\tau_0)+v_0$.
For every $\varepsilon>0$, there exists a positive number $\delta>0$ satisfying
the following: If $\|v_0\|_{l^2}<\delta$, there exist a constant $c_+>1$
and a $C^1$-function $x(t)$ such that
\begin{align}
& \|u(t)-\tilde{u}_{c_0}(\cdot-x(t))\|_{l^2}< \varepsilon,\\
& 
\label{eq:asym-st}
\lim_{t\to\infty}\left\|
u(t)-\tilde{u}_{c_+}(\cdot-x(t))\right\|_{l^2(n\ge x(t)-R)}=0
\quad\text{for every $R>0$},
\\ \label{eq:cbound}
& \sup_{t\in\R}\left(|c(t)-c_0|+|\dot{x}(t)-c_0|\right)=O(\|v_0\|_{l^2}),
\\ \label{eq:clim}
& \lim_{t\to\infty}c(t)=c_+,\quad \lim_{t\to\infty}\dot{x}(t)=c_+.
\end{align}
\end{proposition}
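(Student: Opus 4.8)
The plan is to run a continuity (bootstrap) argument on the maximal interval $[0,T)$ on which the decomposition \eqref{eq:decomp}, \eqref{eq:orth1}, \eqref{eq:orth2} exists and the a priori bound $\mathcal{N}(T):=\sup_{t\in[0,T]}\left(|c(t)-c_0|+|\dot{x}(t)-c_0|\right)\le\delta_4$ holds, with $\delta_4$ as in Lemma \ref{lem:v2}. For $\|v_0\|_{l^2}$ small, Lemma \ref{lem:decomp} furnishes the decomposition near $t=0$ with $|c(0)-c_0|+|\gamma(0)-\tau_0|=O(\|v_0\|_{l^2})$; since $u(t)-v_1(t)=u_{c(t)}(\gamma(t))+v_2(t)$ stays close to the soliton in $l^2_a$ as long as $\|v_2(t)\|_{X(t)}$ is small, the decomposition persists, and it suffices to improve the a priori bound to $\mathcal{N}(T)=O(\|v_0\|_{l^2})$.

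The heart of the matter is the time-integrated estimate \eqref{eq:i3}. For $v_2$ this is exactly \eqref{eq:v-w2} of Lemma \ref{lem:v2}. For $v_1$ I would deduce it from the virial Lemma \ref{lem:monotonicity}(ii) by taking the comoving curve $\tilde{x}(t)=x(t)$, which satisfies $\inf\dot{x}\ge c_1>1$ on $[0,T]$ once $|\dot{x}-c_0|\le\delta_4$ is small. The key is to compare the soliton-centred weight $W(t)$ with the virial weight: since $\sech^2 y\gtrsim e^{-2|y|}$, for $a\le\kappa(c_0)/2$ one has $e^{-\kappa(c(t))|n-x(t)|}\lesssim \sech^2(a(n-x(t)))=a^{-1}\tilde{\psi}_a(t,n)^2$, whence $\|v_1(t)\|_{W(t)}^2\lesssim a^{-1}\|\tilde{\psi}_a(t)v_1(t)\|_{l^2}^2$. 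Integrating over $[0,T]$ and invoking Lemma \ref{lem:monotonicity}(ii) gives $\int_0^T\|v_1\|_{W}^2\,dt\lesssim\|v_0\|_{l^2}^2$ with a constant independent of $T$, which together with \eqref{eq:v-w2} yields \eqref{eq:i3}.

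With \eqref{eq:i3} in hand I would close the bootstrap. Integrating the first modulation equation of Lemma \ref{lem:modulation} gives $|c(t)-c(0)|\le\int_0^t|\dot{c}|\lesssim\int_0^t(\|v_1\|_W^2+\|v_2\|_X^2)\,ds\lesssim\|v_0\|_{l^2}^2$, so $\sup_{[0,T]}|c-c_0|=O(\|v_0\|_{l^2})$; the second equation, together with the pointwise bounds $\|v_1\|_W\lesssim\|v_1\|_{l^2}\lesssim\|v_0\|_{l^2}$ (Lemma \ref{lem:monotonicity}(i)) and $\sup_t\|v_2\|_X=O(\|v_0\|_{l^2})$ (from \eqref{eq:v-w1} and \eqref{eq:v2-3}), gives $\sup_{[0,T]}|\dot{x}-c_0|=O(\|v_0\|_{l^2})$. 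Hence $\mathcal{N}(T)=O(\|v_0\|_{l^2})<\delta_4$, the decomposition continues past $T$, and $T=\infty$; this is \eqref{eq:cbound}. Since $\dot{c}\in L^1(0,\infty)$, the limit $c_+:=\lim_{t\to\infty}c(t)$ exists with $|c_+-c_0|=O(\|v_0\|_{l^2}^2)$, so $c_+>1$. For the orbital bound I would use Lemma \ref{lem:speed-Hamiltonian}: $\|v(t)\|_{l^2}^2\lesssim|c(t)-c_0|+\|v_0\|_{l^2}\lesssim\|v_0\|_{l^2}$, so $\|u(t)-\tilde{u}_{c_0}(\cdot-x(t))\|_{l^2}\le\|v(t)\|_{l^2}+\|(\tilde{u}_{c(t)}-\tilde{u}_{c_0})(\cdot-x(t))\|_{l^2}\lesssim\|v_0\|_{l^2}^{1/2}$, which is $<\varepsilon$ once $\delta$ is chosen small.

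Finally, for the asymptotic statements I would show $\|v_1(t)\|_{W(t)}\to0$ and $\|v_2(t)\|_{X(t)}\to0$ as $t\to\infty$. The first follows from Corollary \ref{cor:monotonicity}: fixing $c_1\in(1,c_+)$ and noting $x(t)/t\to c_+$, I split the $W$-sum at $n=c_1t$, so the part $n\ge c_1t$ vanishes by the corollary while on $n<c_1t$ the weight is $O(e^{-\kappa(c_0-c_1)t})$. The second follows from \eqref{eq:v-w1}, since a standard splitting shows the convolution of the integrable kernel $e^{-b(t-s)/4}$ against $\|v_1\|_W\in L^2\cap L^\infty$ tends to $0$. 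Consequently $\dot{x}(t)-c(t)\to0$ by the modulation equation, so $\dot{x}(t)\to c_+$, which is \eqref{eq:clim}. On the region $n\ge x(t)-R$ I then split $u-\tilde{u}_{c_+}(\cdot-x)=(\tilde{u}_{c(t)}-\tilde{u}_{c_+})(\cdot-x)+v_1+v_2$: the first term is $O(|c(t)-c_+|)\to0$, the $v_2$-term is bounded by $e^{aR}\|v_2\|_{X(t)}\to0$, and the $v_1$-term is $\le\|v_1\|_{l^2(n\ge c_1t)}\to0$ once $x(t)-R\ge c_1t$. This gives \eqref{eq:asym-st}. I expect the main obstacle to be the self-consistent closing of the bootstrap: the decay estimate for $v_2$ (Lemma \ref{lem:v2}) and the modulation bounds each presuppose the smallness of $\mathcal{N}(T)$ that they are invoked to establish, and the comparison of $W(t)$ with $\tilde{\psi}_a$ must be arranged so that the $T$-independent virial bound feeds correctly into $\int_0^t|\dot{c}|$.
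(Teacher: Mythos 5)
Your proposal is correct and follows essentially the same route as the paper's proof: a continuity argument on the maximal time of the decomposition, closed by combining Lemma \ref{lem:decomp}, the modulation estimates of Lemma \ref{lem:modulation}, the virial bound of Lemma \ref{lem:monotonicity} (ii) applied along $\tilde{x}(t)=x(t)$ (with the same $W(t)$-versus-$\tilde{\psi}_a$ weight comparison the paper uses implicitly), the $v_2$ estimates of Lemma \ref{lem:v2}, and Lemma \ref{lem:speed-Hamiltonian}, followed by the identical convolution splitting and region splitting for \eqref{eq:asym-st} and \eqref{eq:clim}. The only blemish is trivial: since $|c(0)-c_0|=O(\|v_0\|_{l^2})$, you get $|c_+-c_0|=O(\|v_0\|_{l^2})$ rather than $O(\|v_0\|_{l^2}^2)$, which does not affect the conclusion $c_+>1$.
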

\begin{proof}
 Let $\delta_5=\min_{1\le i\le 4}\delta_i$ and 
 \begin{align*}
&  T_0:=\sup\left\{t:
\text{\eqref{eq:decomp}, \eqref{eq:orth1} and \eqref{eq:orth2} hold
for $0\le \tau\le t$}\right\}, 
\\ &
T_1:=\sup\left\{t\le T_0: \|v_0\|_{l^2}+ 
\sup_{0\le \tau\le t}\left(|c(\tau)-c_0|+|\dot{x}(\tau)-c_0|\right)
\le \delta_5\right\}.
\end{align*}
If $\delta$ is sufficiently small, Proposition \ref{prop:1} and Lemma
\ref{lem:decomp} imply that $T_1>0$.
We will show that $T_0=T_1$ for small $\delta$.
Suppose that $t\in[0,T_1)$. 
Lemmas  \ref{lem:modulation}, \ref{lem:monotonicity} and \ref{lem:v2} and 
\eqref{eq:v2-1} imply
\begin{align}
  \label{eq:xdot}
|\dot{x}(t)-c(t)|\lesssim & \|v_1(t)\|_{W(t)}+\|v_2(t)\|_{X(t)}
\\ \lesssim & \|v_1(t)\|_{W(t)}+\|\tilde{v}_2(t)\|_{X(t)} 
\lesssim \|v_0\|_{l^2}. \notag
\end{align}
\par
By  Lemmas \ref{lem:decomp} and \ref{lem:modulation},
\begin{align*}
    |c(t)-c_0|\le & |c(0)-c_0|+\int_0^t|\dot{c}(s)|ds
\\ \lesssim & \|v_0\|_{l^2}+\int_0^t
\left(\|v_1(s)\|_{W(s)}^2+\|v_2(s)\|_{X(s)}^2\right)ds.  
\end{align*}
In view of Lemmas \ref{lem:monotonicity} (ii) and
\ref{lem:v2},
we have
\begin{equation}
  \label{eq:c-est}
|c(t)-c_0|\lesssim \|v_0\|_{l^2}.
\end{equation}
It follows from \eqref{eq:xdot} and \eqref{eq:c-est} that $T_0=T_1$ if
$\delta$ is sufficiently small.
\par
Next, we will show that $T_0=\infty$ for small $\delta$. 
Suppose that for every $\delta>0$, there exists $v_0$ 
such that $\|v_0\|_{l^2}<\delta$ and $T_0<\infty$.
By Lemma \ref{lem:speed-Hamiltonian} and \eqref{eq:c-est},
\begin{equation}
  \label{eq:vl}
\sup_{t\in[0,T_0]}\|v(t)\|_{l^2}^2 \lesssim \|v_0\|_{l^2}.
\end{equation}
Using \eqref{eq:v2-1}, Lemmas \ref{lem:v2} and \ref{lem:monotonicity} (i),
we have 
\begin{equation}
  \label{eq:vw}
\begin{split}
\sup_{t\in[0,T_0]}\|v_2(t)\|_{X(t)}\lesssim & 
\sup_{t\in[0,T_0]}\left(\|v_1(t)\|_{W(t)}+\|\tilde{v}_2(t)\|_{X(t)}\right)
 \lesssim  \|v_0\|_{l^2}.
\end{split}  
\end{equation}
By \eqref{eq:vl} and \eqref{eq:vw}, we get
$\|v(T_0)\|_{l^2}+e^{-ax(T_0)}\|v_2(T_0)\|_{l^2_a} \lesssim \|v_0\|_{l^2}.$
Hence it follows from Lemma \ref{lem:decomp} that the decomposition
\eqref{eq:decomp}, \eqref{eq:orth1} and \eqref{eq:orth2} can be extended
beyond $t=T_0$ if $\|v_0\|_{l^2}$ is small.
This is a contradiction. Thus we prove $T_0=\infty$ for small  $v_0\in l^2$.
\par
Let $\delta$ be a small positive number such that $T_0=T_1=\infty$.
Then Lemma \ref{lem:monotonicity} (ii) and Lemma \ref{lem:v2} imply
$\|v_1(t)\|_{W(t)}+\|v_2(t)\|_{X(t)}\in L^2(0,\infty)$.
Thus by Lemma \ref{lem:modulation}, we see that
$\dot{c}(t)$ is integrable on $[0,\infty)$ and that there exists $c_+$
satisfying $\lim_{t\to\infty}c(t)=c_+$.
\par
Next, we will prove \eqref{eq:asym-st}.
As in the proof of Corollary \ref{cor:monotonicity}, we can prove
$\lim_{t\to\infty}\|v_1(t)\|_{W(t)}=0$. Combining this with \eqref{eq:v2decay},
we have
\begin{equation}
  \label{eq:==}
  \lim_{t\to\infty}\dot{x}(t)=\lim_{t\to\infty}c(t)=c_+.
\end{equation}
By \eqref{eq:v2-1}, Lemma \ref{lem:v2} and the fact that
$\|v_1(t)\|_{W(t)}\in L^2(0,\infty)$,
\begin{equation}
  \label{eq:v2decay}
\begin{split}
  \|v_2(t)\|_{X(t)}
\lesssim & \|v_1(t)\|_{W(t)}+ \|\tilde{v}_2(t)\|_{X(t)}
\\ \lesssim &
\|v_1(t)\|_{W(t)}+e^{-bt/4}\|v_0\|_{l^2} +\sup_{t/2\le s\le t}\|v_1(s)\|_{W(s)}
\\ &
+e^{-bt/8}\left(\int_0^{t/2}\|v_1(s)\|_{W(s)}^2ds\right)^{1/2}
 \to 0 \quad\text{as $t\to\infty$.}
\end{split}  
\end{equation}
Since $\|v_2(t)\|_{l^2(n\ge x(t)-R)} \lesssim \|v_2(t)\|_{X(t)}$ for every 
$R>0$, Corollary \ref{cor:monotonicity} and \eqref{eq:v2decay} imply
\eqref{eq:asym-st}.
Combining this \eqref{eq:==} and \eqref{eq:v2decay},
we have
$$\lim_{t\to\infty}\dot{x}(t)=\lim_{t\to\infty}c(t)=c_+.$$
\par
We have thus completed the proof of Proposition \ref{prop:2}.
\end{proof}
Combining Proposition \ref{prop:2} and the \textit{monotonicity} argument
given in \cite{MM2}, we obtain Theorem \ref{thm:1}.
\begin{proof}[Proof of Theorem \ref{thm:1}]
Put
\begin{gather*}
{}^{t}\!(\tilde{r}(t,n),\tilde{p}(t,n)):=v(t,n),
\quad
h(t,n)=\frac{1}{2}\tilde{p}(t,n)^2+V(\tilde{r}(t,n)),\\
N_3(t)=J\left\{H'(u_{c(t)}(\gamma(t))+v(t))-H'(u_{c(t)}(\gamma(t)))
-H'(v(t))\right\}.
\end{gather*}
Let $\sigma\in (1,c_+)$, $t_1\ge0$ and
$\tilde{x}(t)=x(t_1)+\sigma(t-t_1)$. Let $\psi_a(t,n)$ and
$\tilde{\psi}_a(t,n)$ be as in Lemma \ref{lem:monotonicity}.
Then
\begin{align*}
& \frac{d}{dt}\sum_{n\in\Z}\psi_a(t,n)h(t,n)
\\=& \la H'(v(t)),\psi_a(t)\dot{v}(t)\ra+\sum_{n\in\Z}\pd_t\psi_a(t,n)h(t,n)
\\ =&
\sum_{n\in\Z}\tilde{p}(t,n)V'(\tilde{r}(t,n-1))(\psi_a(t,n-1)-\psi(t,n))
\\ & +\la \psi_a(t)l_1(t),H'(v(t))\ra +\la \psi_a(t)N_3(t),H'(v(t))\ra 
+\sum_{n\in\Z}\pd_t\psi_a(t,n)h(t,n).
\end{align*}
Here we use $\frac{dv}{dt}=JH'(v(t))+l_1(t)+N_3(t)$.
Suppose that $a>0$ and $\|v_0\|_{l^2}$ are sufficiently small.
Since $\|v(t)\|_{l^2}\lesssim\|v_0\|_{l^2}$ follows from Proposition
\ref{prop:2}, we see that there exists a $\delta'>0$
\begin{align*}
\frac{d}{dt}\sum_{n\in\Z}\psi_a(t,n)h(t,n)
 \le &
-\delta'\sum_{n\in\Z}\tilde{\psi}_a(t,n)
\left(\tilde{r}(t,n)^2+\tilde{p}(t,n)^2\right)\\
& +\la \psi_a(t)l_1(t),H'(v(t))\ra+\la \psi_a(t)N_3(t),H'(v(t))\ra
\end{align*}
in exactly the same way as the proof of Lemma \ref{lem:monotonicity}.
By the definitions of $l_1(t)$ and $N_3(t)$ and Lemma \ref{lem:modulation},
\begin{align*}
& |N_3(t)|\lesssim |u_{c(t)}(\gamma(t))v(t)|,\\
& |\la l_1(t),H'(v(t))\ra|\lesssim (\|v_1(t)\|_{W(t)}+\|v_2(t)\|_{X(t)})^2.
\end{align*}
Combining the above, we have
\begin{align*}
& \sum_{n\in\Z}\psi_a(t,n)\left(\tilde{r}(t,n)^2+\tilde{p}(t,n)^2\right) \\
\lesssim & \sum_{n\in\Z}\psi_a(t_1,n)h(t_1,n)
+\int_{t_1}^t(\|v_1(s)\|_{W(s)}+\|v_2(s)\|_{X(s)})^2ds
\\ \lesssim & \sum_{n\in\Z}\psi_a(t_1,n)
\left(|v_1(t_1,n)|^2+|v_2(t_1,n)|^2\right)
+\int_{t_1}^t(\|v_1(s)\|_{W(s)}+\|v_2(s)\|_{X(s)})^2ds.
\end{align*}
As in the proof of Corollary \ref{cor:monotonicity},
we have
$$\lim_{t_1\to\infty}\sum_{n\in\Z}\psi_a(t_1,n)|v_1(t_1,n)|^2=0.$$
On the other hand, Lemma \ref{lem:v2} implies
$$\sum_{n\in\Z}\psi_a(t_1,n)|v_2(t_1,n)|^2\lesssim
\|v_2(t_1)\|_{X(t_1)}^2\to0\quad \text{as $t_1\to\infty$.}$$
Furthermore, Lemmas \ref{lem:monotonicity} and \ref{lem:v2} and 
Proposition \ref{prop:2} imply
$$
\lim_{t_1\to\infty}\int_{t_1}^\infty
(\|v_1(s)\|_{W(s)}^2+\|v_2(s)\|_{X(s)}^2)ds=0.$$
Combining the above, we obtain
$$\lim_{t_1\to\infty}\sup_{t\ge t_1}
\|v(t)\|_{l^2(n\ge \sigma t)}=0.$$
Thus we complete the proof of Theorem \ref{thm:1}.
\end{proof}
\section{Proof of Theorem \ref{thm:2}}
\label{sec:4}
In this section, we will prove orbital and asymptotic stability of solitary
waves to FPU lattice \eqref{eq:FPU}.
For a two-parameter family of solitary wave solutions
$\{u_c(t+\delta): c\in[c_1,c_2],\;\delta\in\R\}$ that satisfies the condition
(P1)--(P4) below, we can prove the orbital and asymptotic stability of
solitary wave solutions in exactly the same way as Theorem \ref{thm:1}.
\begin{itemize}
\item[(P1)] There exists an open interval $I$ such that
$V''(r)>0$ for every $r\in I$ and that 
$\overline{\{r_c(x):x\in\R\}}\subset I$ for every $c\in[c_1,c_2]$.
\item[(P2)] 
There exists $a>0$ such that the map
$\R\times [c_1,c_2]\ni(t,c)\mapsto u_c(t)\in l^2_a\cap l^2_{-a}$ is $C^2$.
\item[(P3)] 
The solitary wave energy $H_F(u_c)$ satisfies
$dH_F(u_c)/dc\ne 0$ for $c\in[c_1,c_2]$.
\item[(P4)]
Let $c_0\in[c_1,c_2]$ and $a\in(0,2\kappa(c_0/c_s))$.
Let $U_0(t,\tau)\varphi$ be a solution to
\begin{equation}
  \label{eq:LFPU}
\left\{
  \begin{aligned}
& \frac{dv}{dt}=JH_F''(u_{c_0})v.\\
& v(\tau)=\varphi.
  \end{aligned}
\right.
\end{equation}
Then there exist positive numbers $b$ and $K$ such that for every
$\varphi\in l^2_a$ and $t\ge \tau$,
$$e^{-ac_0(t-\tau)}\|U_0(t,\tau)Q_c(\tau)\varphi\|_{l^2_a}
\le Ke^{-b(t-\tau)}\|\varphi\|_{l^2_a}.$$
\end{itemize}
\begin{proof}[Proof of Theorem \ref{thm:2}]
If $c>c_s$ and $c$ is sufficiently close to $c_s$, there exists a unique
solitary wave solution to \eqref{eq:solitary} up to translation
(\cite[Theorem 1.1]{FP1}). By \cite[Theorem 1.1]{FP1},
we see that a solitary wave solution satisfies (P1) and (P3) if $c$
is close to $c_s$.
Slightly modifying the proof of \cite[Proposition 6.1]{FP1} and 
\cite[Proposition A.3]{FP2}, we obtain (P2).
Since (P4) holds for small solitary waves (see \cite{FP4}),
Theorem \ref{thm:2} can be proved in exactly the same way as Theorem \ref{thm:1}.
\end{proof}
\section*{Acknowledgment}
The author would like to express his gratitude to Professor Robert L. Pego 
for his hospitality at Carnegie Mellon University where this work was
carried out.

\end{document}